\newtheorem{theorem}{Theorem}[section]
\newtheorem{lemma}[theorem]{Lemma}
\theoremstyle{definition}
\newtheorem{definition}[theorem]{Definition}
\newtheorem{example}[theorem]{Example}
\newtheorem{cor}[theorem]{Corollary}
\theoremstyle{remark}
\newcommand{\ack}{\noindent \textbf{Acknowledgements. }}
\newcommand{\org}{\noindent \textbf{Organization. }}
\DeclareMathOperator{\interior}{int}
\DeclareMathOperator{\aff}{Aff}
\DeclareMathOperator{\closure}{cl}
\DeclareMathOperator{\bip}{Bip}
\DeclareMathOperator{\ehr}{Ehr}
\DeclareMathOperator{\T}{Top}
\newcommand{\N}{\mathbb{N}}
\newcommand{\R}{\mathbb{R}}
\newcommand{\conv}{\operatorname{Conv}}
\numberwithin{equation}{section}
\title{A mirroring formula for the interior polynomial of a bipartite graph}
\author{Keiju Kato}
\date{\today}
\email{kato.k.at@m.titech.ac.jp}
\address{Department of Mathematics, Tokyo Institute of Technology, Oh-okayama 2-12-1, Meguro-ku, Tokyo 152-8551, Japan}
\begin{document}
\maketitle

\begin{abstract}
The interior polynomial is an invariant of (signed) bipartite graphs, and the interior polynomial of a plane bipartite graph is equal to a part of the HOMFLY polynomial of a naturally associated link. The HOMFLY polynomial $P_L(v,z)$ is a famous link invariant with many known properties. For example, the HOMFLY polynomial of the mirror image of $L$ is given by $P_{L}(-v^{-1},z)$. This implies a property of the interior polynomial in the planar case. We prove that the same property holds for any bipartite graph. The proof relies on Ehrhart reciprocity applied to the so called root polytope. We also establish formulas for the interior polynomial inspired by the knot theoretical notions of flyping and mutation.
\end{abstract}

\section{Introduction}
In this paper, we investigate properties of the interior polynomial, which is an invariant of bipartite graphs. A priori, the interior polynomial is an invariant of hypergraphs defined by K\'alm\'an \cite{K}. Here a hypergraph $\mathscr{H}=(V,E)$ has a vertex set $V$ and a hyperedge set $E$, where $E$ is a multiset of non-empty subsets of $V$. The interior polynomial is naturally associated to this structure, but by the main result of \cite{KP}, we may also regard the interior polynomial as an invariant of the natural bipartite graph $\bip \mathscr{H}$ with color classes $E$ and $V$. The author extended the interior polynomial to signed bipartite graphs, that is, bipartite graphs $G$ with a sign $\mathcal{E} \to \{ +1 , -1 \}$, where $\mathcal{E}$ is the set of edges \cite{kato}. The signed interior polynomial $I^+_G$ is constructed as an alternating sum of the interior polynomials of the bipartite graphs obtained from $G$ by deleting some negative edges and forgetting the sign.

The interior polynomial is related to a part of the HOMFLY polynomial. The HOMFLY polynomial \cite{homfly} is a two-variable invariant of oriented links in $S^3$ defined by the skein relation
\[
v^{-1} P_{\includegraphics[width=0.35cm]{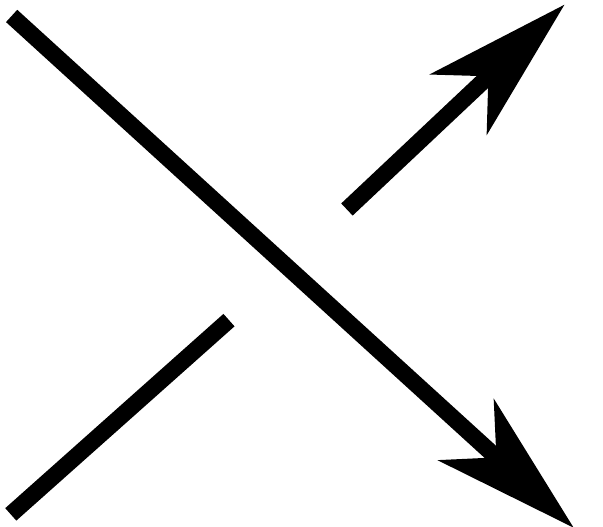}}(v,z)-
v      P_{\includegraphics[width=0.35cm]{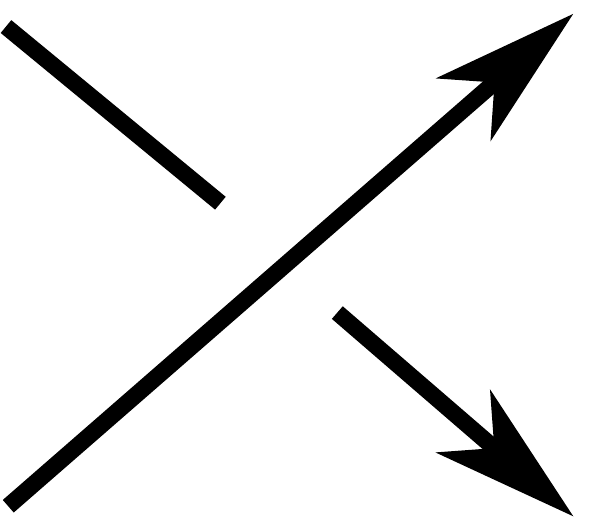}}(v,z)=
z      P_{\includegraphics[width=0.35cm]{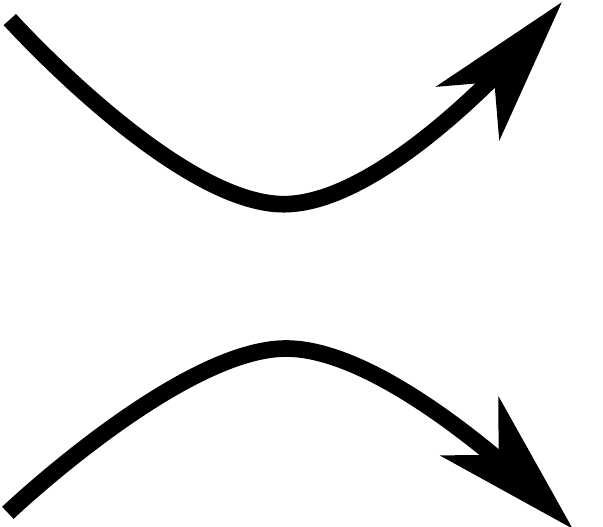}}(v,z),
\]
and the initial condition $P_{\mbox{\footnotesize unknot}}(v,z)=1$. Regarding arbitrary links, Morton \cite{morton} showed that the maximal $z$ exponent in the HOMFLY polynomial of an oriented link diagram $D$ is less than or equal to $c(D)-s(D)+1$, where $c(D)$ is the crossing number of $D$ and $s(D)$ is the number of its Seifert circles. We call the coefficient of $z^{c(D)-s(D)+1}$, which is a polynomial in $v$, the top of the HOMFLY polynomial and denote it by $\T_D(v)$. When $G=(V,E,\mathcal{E}_+\cup\mathcal{E}_-)$ is a signed plane bipartite graph, the coefficients of the interior polynomial $I^+_G(x)$ agree with the coefficients of $\T_{L_G}(v)$, where $L_G$ is the special link diagram with Seifert graph $G$ \cite{kato}. More precisely,
\[
\T_{L_G}(v)=v^{|\mathcal{E}_{+}|-|\mathcal{E}_{-}|-(|E|+|V|)+1}I^{+}_{G}(v^2 ).
\]

This correspondence follows from its special case when $G$ is a positive graph and that in turn is established in two steps. First, the interior polynomial of $G$ is equivalent to the Ehrhart polynomial of the root polytope of $G$ \cite{KP}. The latter can be thought of as an h-vector \cite{KP} and coincides with $\T_{L_G}$ \cite{KM}.

The HOMFLY polynomial has many properties. For example, for any link $L$, the HOMFLY polynomial of the mirror image $L^*$ satisfies $P_{L^*}(v,z)=P_{L}(-v^{-1},z)$. So the coefficients of $\T_{L^*}(v)$ are obtained from those of $\T_{L}(v)$ by reversing the order (and possibly an overall sign change, which occurs exactly when $L$ has an even number of components). If $G$ is the Seifert graph of $L$, the Seifert graph of $L^*$ is obtained from $G$ by changing all the signs. We denote that graph by $-G$. The main result of the paper is that the connection between $I_G^+$ and $I_{-G}^+$, provided by the HOMFLY polynomial in the planar case, is true in general. Namely, we have the following.
\begin{theorem}\label{thm:signchange}
For any signed bipartite graph $G=(V,E,\mathcal{E}_+\cup\mathcal{E}_-)$, let $-G$ be the signed bipartite graph obtained from $G$ by changing all the signs. Then
\[
(-1)^{|\mathcal{E}_+|+|\mathcal{E}_-|+|E|+|V|-1}x^{|E|+|V|-1}I^+_G(1/x)=I^+_{-G}(x).
\]
\end{theorem}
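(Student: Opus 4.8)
The plan is to translate everything into Ehrhart theory of the root polytope and then read off the sign change $G\mapsto -G$ from Ehrhart--Macdonald reciprocity. I may assume $G$ is connected, the disconnected case following by treating components separately. Recall from \cite{KP} that for a positive (unsigned) connected bipartite graph the interior polynomial is the $h^*$-polynomial of the root polytope $Q_G$, the convex hull in $\R^E\oplus\R^V$ of the points $e_i+e_j$ taken over the edges $ij$ of $G$. These points satisfy (sum of $E$-coordinates) $=$ (sum of $V$-coordinates) $=1$, so $\dim Q_G=|E|+|V|-2$ and hence $\dim Q_G+1=|E|+|V|-1$; this already identifies the exponent in the theorem with the shift $\dim Q_G+1$ that governs the reversal of an Ehrhart series.

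Next I would reinterpret the alternating sum defining $I^+_G$. Since deleting a negative edge deletes the corresponding vertex of $Q_G$, the inclusion--exclusion over subsets of $\mathcal E_-$ should collapse, via a half-open decomposition of $Q_G$ coming from a unimodular triangulation adapted to the bipartite structure (such triangulations exist for root polytopes and underlie the results of \cite{KP}), to an Ehrhart enumerator of $Q_G$ that is relatively interior with respect to the negative part: informally, the lattice points that ``use'' every negative vertex. In these terms $I^+_{-G}$ is the identical construction with the roles of $\mathcal E_+$ and $\mathcal E_-$, i.e. of closed and open boundary data, exchanged.

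I would then invoke Ehrhart--Macdonald reciprocity in its half-open form. Passing from a half-open polytope to the one with the complementary set of faces opened reverses the $h^*$-polynomial by $x^{\dim Q_G+1}(\,\cdot\,)(1/x)$ and introduces the reciprocity sign $(-1)^{\dim Q_G}$. Because the data for $-G$ is exactly the complement of the data for $G$, this converts the reversed polynomial $x^{|E|+|V|-1}I^+_G(1/x)$ into $I^+_{-G}(x)$ up to sign. The remaining sign is pure bookkeeping: the factor $(-1)^{\dim Q_G}=(-1)^{|E|+|V|}$ from reciprocity combines with the sign contributions recorded in the definition of $I^+$ to give precisely $(-1)^{|\mathcal E_+|+|\mathcal E_-|+|E|+|V|-1}$.

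The main obstacle I expect is the middle step: deletion of vertices is not a face operation, and the subpolytopes $Q_{G\setminus S}$ genuinely drop in dimension, so establishing that the alternating sum telescopes to a clean relative-interior enumerator --- rather than to the Euler characteristic of some larger complex --- requires a carefully chosen triangulation whose restriction to each $Q_{G\setminus S}$ is induced. Once this half-open reformulation (and its mirror for $-G$) is secured, the reciprocity step and the final sign count are routine, and the degree $|E|+|V|-1=\dim Q_G+1$ falls out automatically.
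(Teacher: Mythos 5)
Your outline for the all-positive base case (Ehrhart reciprocity for $Q_G$ together with $\dim Q_G+1=|E|+|V|-1$) matches the paper's starting point, but the step you yourself flag as the main obstacle is a genuine gap, and it is not one that a better-chosen triangulation can repair. The signed sum of indicator functions
\[
\sum_{\mathcal{S}\subseteq\mathcal{E}_-}(-1)^{|\mathcal{S}|}\,[Q_{G\setminus\mathcal{S}}]
\]
is in general not the indicator function of any half-open region: it can take the value $-1$. Already for $G=K_{2,2}$ with $E=\{e_1,e_2\}$, $V=\{v_1,v_2\}$, the root polytope is a square with vertices corresponding to the four edges, and the diagonals are $\{e_1v_1,e_2v_2\}$ and $\{e_1v_2,e_2v_1\}$. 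Taking $\mathcal{E}_-=\{e_1v_1,e_1v_2\}$, the two triangles $Q_{G\setminus\{e_1v_1\}}$ and $Q_{G\setminus\{e_1v_2\}}$ overlap in a two-dimensional region, and at an interior point of that overlap the sum above equals $1-1-1+0=-1$. Hence there is no ``complementary half-open polytope'' to which Ehrhart--Macdonald reciprocity could be applied; the failure occurs at the level of indicator functions, before any triangulation enters. (Relatedly, $I^+_G$ need not have nonnegative coefficients for signed $G$, so it cannot be the $h^*$-polynomial of a half-open lattice polytope.) What does telescope cleanly is the alternating sum over \emph{all} subsets of the vertex set, $\sum_{S\subseteq X}(-1)^{|S|-1}[\conv S]=(-1)^{\dim(\conv X)}[\interior(\conv X)]$, and even that identity requires a proof (it is Theorem \ref{conv}, established by induction on $|X|-\dim(\conv X)$), which your proposal does not supply.

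The paper circumvents the obstacle by a different mechanism that your write-up is missing. It first proves the unsigned identity $(-x)^{|E|+|V|-1}I'_G(1/x)=\sum_{\mathcal{S}\subseteq\mathcal{E}}(-1)^{|\mathcal{S}|}I'_{\mathcal{S}}(x)$ by combining Ehrhart reciprocity with the full inclusion--exclusion identity above; this settles the case $\mathcal{E}_-=\emptyset$ after unwinding the definition of $I^+_{-G}$. It then inducts on $|\mathcal{E}_-|$ using the skein-type relation $I^+_G=I^+_{G+\epsilon}-I^+_{G\setminus\epsilon}$ (Lemma \ref{lem:sumint}): the inductive hypothesis applies to $G+\epsilon$ and $G\setminus\epsilon$, and the identity $I^+_{-(G+\epsilon)}=I^+_{-G}-I^+_{-G\setminus\epsilon}$ reassembles $I^+_{-G}$ with the correct sign. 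That induction is the ingredient you need: it reduces the signed statement to the unsigned one without ever interpreting the partial alternating sum over $\mathcal{E}_-$ geometrically.
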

The proof is based on Ehrhart reciprocity of the root polytope and the following result on interiors of convex hulls, which may be interesting in its own right.
\begin{theorem}\label{conv}
For any finite set $X=\{x_0,\ldots,x_{n}\}\subset\mathbb{R}^d$, we have
\begin{equation}\label{eq:ind}
(-1)^{\dim(\conv X)}[\interior{(\conv X)}]=\sum_{S\subseteq X}(-1)^{|S|-1}[\conv S],
\end{equation}
where $\conv$ means convex hull, $\interior$ means relative interior, and $[A]$ stands for the indicator function of $A\subseteq\mathbb{R}^d$.
\end{theorem}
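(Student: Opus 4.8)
The plan is to prove \eqref{eq:ind} pointwise. Both sides are integer linear combinations of indicator functions of polytopes, so it suffices to fix an arbitrary $y\in\R^d$ and check that the two values agree. Put $P=\conv X$ and $d_0=\dim P$. Since the $S=\emptyset$ term on the right-hand side is the zero function, \eqref{eq:ind} evaluated at $y$ is equivalent to
\[
\sum_{S\subseteq X}(-1)^{|S|}\,[\conv S](y)=(-1)^{d_0+1}\,[\interior P](y).
\]
If $y\notin P$ then $y\notin\conv S$ for all $S\subseteq X$ and both sides vanish, so assume $y\in P$.

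Next I would dispose of the case $y\in P\setminus\interior P$. The relative boundary of a polytope is the union of its proper faces, so choose a proper face $F\subsetneq P$ with $y\in F$ and set $X_F=X\cap F$. By the defining property of a face, whenever $y$ is a convex combination with all coefficients positive of points of $P$, those points all lie in $F$; hence $[\conv S](y)=[\conv(S\cap X_F)](y)$ for every $S\subseteq X$. Decomposing each $S$ as $(S\cap X_F)\sqcup(S\setminus X_F)$ and using this, the sum factors as
\[
\sum_{S\subseteq X}(-1)^{|S|}[\conv S](y)=\Bigl(\sum_{T\subseteq X_F}(-1)^{|T|}[\conv T](y)\Bigr)\sum_{U\subseteq X\setminus X_F}(-1)^{|U|},
\]
whose second factor is $(1-1)^{|X\setminus X_F|}=0$ because $F$ proper forces $X_F\neq X$. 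As $[\interior P](y)=0$ here as well, the identity holds. (The same factorization with $F=P$ leaves the interior case untouched.)

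The remaining, and main, case is $y\in\interior P$, where I must show $\sum_{S\subseteq X}(-1)^{|S|}[\conv S](y)=(-1)^{d_0+1}$. Since $\sum_{S\subseteq X}(-1)^{|S|}=(1-1)^{|X|}=0$, this rewrites as $\sum_{S\in\Delta}(-1)^{|S|}=(-1)^{d_0}$, where $\Delta=\{S\subseteq X:\ y\notin\conv S\}$ is an abstract simplicial complex (it is closed under passing to subsets). The left-hand side equals $-\widetilde\chi(\Delta)$, a reduced Euler characteristic, so it is enough to prove $\widetilde\chi(\Delta)=(-1)^{d_0-1}$, which I will obtain from a homotopy equivalence $\Delta\simeq S^{d_0-1}$ (the degenerate case $d_0=0$, where $X=\{y\}$, being checked by hand). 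We may assume $y\notin X$, since deleting $y$ from $X$ if present alters neither $\Delta$ nor $P$. Radially project from $y$: identifying $\aff P-y$ with $\R^{d_0}$, set $v_x=(x-y)/\lVert x-y\rVert\in S^{d_0-1}$ for $x\in X$. A direct computation gives $y\in\conv S\Leftrightarrow 0\in\conv\{v_x:x\in S\}$, so $\Delta=\{S\subseteq X:\ 0\notin\conv\{v_x:x\in S\}\}$, and $y\in\interior P$ becomes the statement that the $v_x$ positively span $\R^{d_0}$. Cover $S^{d_0-1}$ by the open hemispheres $U_x=\{u:\langle u,v_x\rangle>0\}$; this is indeed a cover by positive spanning, each $U_x$ is contractible, and for $S\subseteq X$ the intersection $\bigcap_{x\in S}U_x$ is nonempty precisely when $\conv\{v_x:x\in S\}$ can be strictly separated from $0$, i.e.\ precisely when $S\in\Delta$. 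Moreover every nonempty such intersection is a nonempty intersection of open hemispheres, hence geodesically convex and thus contractible, so $\{U_x\}_{x\in X}$ is a good cover whose nerve is exactly $\Delta$, and the Nerve Theorem yields $\Delta\simeq S^{d_0-1}$.

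The crux — and the step I expect to cause the most friction — is this last one: verifying the good-cover hypothesis of the Nerve Theorem, which reduces to checking that a nonempty intersection of open hemispheres of $S^{d_0-1}$ is contractible. This follows because such an intersection contains no antipodal pair and is geodesically convex, hence deformation retracts onto any of its points along minimizing great-circle arcs. One also needs to handle the degenerate configuration $d_0=0$ separately, but that is immediate.
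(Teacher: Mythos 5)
Your proof is correct, but the argument for the key case $y\in\interior(\conv X)$ takes a genuinely different route from the paper's. The treatment of points outside $\conv X$ and of relative boundary points essentially matches the paper (which also restricts to the face of $\conv X$ containing the point and cancels in pairs; your factorization through the factor $(1-1)^{|X\setminus X_F|}=0$ is a cleaner way to phrase the same cancellation). For interior points, however, the paper argues by induction on $|X|-\dim(\conv X)$: it lifts $X$ to a set $Y\subset\R^{d+1}$ with $\dim(\conv Y)=\dim(\conv X)+1$, and evaluates the sum along the fiber $p^{-1}(x)$ by counting the intersection points $a_j$ of the fiber with the polytopes $\conv S'$ against the midpoints $b_j$ between them. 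You instead reduce the identity to computing the reduced Euler characteristic of the simplicial complex $\Delta=\{S\subseteq X: y\notin\conv S\}$ and identify $\Delta$ with the nerve of the cover of $S^{d_0-1}$ by the open hemispheres determined by the radial projections $v_x$, so the Nerve Theorem gives $\Delta\simeq S^{d_0-1}$ and $\widetilde{\chi}(\Delta)=(-1)^{d_0-1}$. The trade-off: the paper's route is elementary and self-contained at the cost of a somewhat delicate lifting-and-fiber argument (including a genericity choice of the heights $\epsilon_i$); yours imports the Nerve Theorem from algebraic topology but is conceptually cleaner, needs no perturbation, and determines the full homotopy type of $\Delta$ rather than just its Euler characteristic. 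The steps you flag as potential friction points do go through: a nonempty intersection of open hemispheres contains no antipodal pair, hence is geodesically convex and contracts along minimizing arcs; the equivalences $y\in\conv S\Leftrightarrow 0\in\conv\{v_x:x\in S\}$ and $\bigcap_{x\in S}U_x\neq\emptyset\Leftrightarrow S\in\Delta$ (via strict separation of the compact convex set $\conv\{v_x:x\in S\}$ from the origin) are as routine as you claim; and the reduction to $y\notin X$ is legitimate because an interior point of a polytope of positive dimension is not a vertex.
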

 
Flyping and mutation are link operations which do not change the HOMFLY polynomial. The link obtained by flyping is, in fact, ambient isotopic to the original. Mutation may change the link type but it leaves the HOMFLY polynomial invariant. Based on the Seifert graphs of the link diagrams before and after flyping or mutation, we define flyping and mutation for signed bipartite graphs and we obtain the following theorem, which follows relatively easily from the recursion relation established in \cite{kato}.
\begin{theorem}\label{thm:nochange}
For any signed bipartite graph, the interior polynomial does not change under graph flyping and graph mutation.
\end{theorem}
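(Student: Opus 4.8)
The plan is to deduce both statements from the recursion relation of \cite{kato} by an induction that reduces each move to a situation in which it is visibly an isomorphism of signed bipartite graphs. I would treat graph flyping and graph mutation uniformly: in each case $G$ comes equipped with a distinguished subgraph $H$ (the ``tangle''), which is attached to the rest of $G$ only along a small set $B$ of boundary vertices --- two of them in the mutation case --- possibly together with a single distinguished edge, and the move $G\mapsto G^{\flat}$ rearranges $H$ relative to $B$ while leaving the rest of $G$, and the way it is attached, untouched.

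First I set up an induction on a complexity measure of the tangle $H$, e.g.\ its number of edges refined by its number of negative edges. For the inductive step I pick an edge $e$ of $H$ and apply the recursion of \cite{kato} at $e$ simultaneously to $G$ and to $G^{\flat}$. The key feature is that the recursion is \emph{local}: it rewrites $I^{+}_{G}$ as a fixed integer combination of the interior polynomials of the signed bipartite graphs obtained from $G$ by modifying $e$ alone --- deleting it, and, for a negative edge, switching its sign --- so, since $e$ lies inside $H$, each graph occurring in the expansion is again ``a tangle attached to the same rest along $B$'', the new tangle being $H\setminus e$, or $H$ with the sign of $e$ switched. Because the move $G\mapsto G^{\flat}$ depends only on $H$ relative to $B$, it carries each term of the expansion of $I^{+}_{G}$ to the matching term of the expansion of $I^{+}_{G^{\flat}}$, so the inductive hypothesis gives $I^{+}_{G}=I^{+}_{G^{\flat}}$ term by term. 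In the base case $H$ carries no edges: then $H$ is just the boundary vertices together with some isolated vertices, and a flype, resp.\ mutation, of such a configuration is the identity up to relabelling of isolated vertices and, in the mutation case, exchange of the two vertices of $B$ --- which nothing in $H$ any longer distinguishes --- so $G^{\flat}\cong G$ and the claim is trivial.

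The part I expect to cost real effort is reconciling this ``locality'' picture with the precise definitions of graph flyping and graph mutation. One has to check that deleting an edge of $H$, or switching its sign, again leaves a legitimate flyping, resp.\ mutation, structure --- in particular that the reduced tangle still meets the rest of $G$ only along $B$, with the two boundary vertices still in interchangeable positions --- and that, should the reduced tangle become disconnected, its components are reassembled correctly, which is taken care of by the elementary multiplicativity of the interior polynomial under gluing at a single vertex. One also has to choose the complexity measure and, if necessary, invoke the companion reductions of \cite{kato} (for positive edges and for building up positive graphs) so that the recursion strictly decreases the measure at each step and the induction actually terminates. Granting the explicit form of the recursion in \cite{kato}, all of this is bookkeeping.
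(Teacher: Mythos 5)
Your overall architecture --- exploit the locality of the recursion relations so that each term in an expansion of $I^{+}_{G}$ is again a tangle attached along the same boundary, match terms with the corresponding expansion of $I^{+}_{G^{\flat}}$, and induct down to a configuration where the move is an isomorphism --- is exactly the strategy of the paper, and your treatment of negative edges via the skein-type relation $I^{+}_{G}=I^{+}_{G+\epsilon}-I^{+}_{G\setminus\epsilon}$ (Lemma \ref{lem:sumint}) is precisely the paper's second induction. However, there is a genuine gap in your inductive step for positive (equivalently, unsigned) edges. You assert a recursion that ``rewrites $I^{+}_{G}$ as a fixed integer combination of the interior polynomials of the graphs obtained from $G$ by modifying $e$ alone.'' No such single-edge relation exists for the interior polynomial: it does not satisfy deletion--contraction at an edge. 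The only available reduction in the unsigned case is Theorem \ref{recursion}, which requires a \emph{cycle} $\epsilon_1,\delta_1,\ldots,\epsilon_n,\delta_n$ inside the tangle and expands $I'_{G}$ over nonempty subsets of $\{\epsilon_1,\ldots,\epsilon_n\}$. This is still ``local'' in your sense (each $G_R\setminus\mathcal{S}$ is again a tangle on the same boundary, and the flype/mutation commutes with the deletion), but it is a different relation from the one you invoke, and it only strictly decreases your complexity measure as long as the tangle contains a cycle.

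Consequently your base case is unreachable as stated: stripping cycles terminates not at an edgeless tangle but at a forest, and a forest attached at the two boundary vertices is not in general carried to an isomorphic configuration by a flype or mutation in any obvious way. The paper closes this with an additional ingredient you do not have, namely \cite[Lemma 6.6]{K}: deleting a pendant vertex (a new vertex attached to exactly one old vertex) does not change the interior polynomial. Applying this repeatedly prunes the acyclic tangle down to the empty graph or a single path joining the two attachment vertices, and only for those is the move visibly trivial. You partially anticipate the problem when you mention ``companion reductions for positive edges,'' but identifying the correct pair of tools --- the cycle recursion of Theorem \ref{recursion} plus the pendant-vertex lemma --- and the correct base case (a path, not an edgeless graph) is the actual mathematical content of the unsigned half of the proof, not bookkeeping.
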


\org In section \ref{sec:int}, we recall some definitions and facts about the interior polynomial for signed bipartite graphs and the Ehrhart polynomial of the root polytope. In section \ref{sec:subgraph}, we prove Theorem \ref{conv} and the unsigned version of Theorem \ref{thm:signchange}. In section \ref{sec:mirror}, we prove Theorem \ref{thm:signchange}, the main theorem in this paper. In section \ref{sec:fm}, we prove Theorem \ref{thm:nochange}.

\ack I should like to express my gratitude to associate professor Tam\'as K\'alm\'an for constant encouragement and much helpful advice.

\section{Preliminaries}\label{sec:int}
\subsection{Interior polynomial and HOMFLY polynomial}
A hypergraph is a pair $\mathscr{H}=(V,E)$, where $V$ is a finite set and $E$ is a finite multiset of non-empty subsets of $V$. We order the set $E$ of hyperedges and define the interior polynomial of $\mathscr{H}$ using the activity relation between hyperedges and so called hypertrees \cite{K}. For the set of hypertrees to be non-empty, here we assume that $\mathscr{H}$ is connected. This means that the graph $\bip\mathscr{H}$, defined below, is connected. The interior polynomial does not depend on the order of the hyperedges. It generalizes the evaluation $x^{|V|-1}T_{G}(1/x,1)$ of the classical Tutte polynomial $T_G(x,y)$ of the graph $G=(V,E)$. We will not review its definition in detail here because it will suffice to rely on a recursive property, stated in Theorem \ref{recursion} below.

We obtain a bipartite graph from the hypergraph $\mathscr{H}=(V,E)$, by letting an edge of the bipartite graph connect a vertex (i.e., an element of $V$) and a hyperedge if the hyperedge contains the vertex. We denote the bipartite graph obtained from the hypergraph $\mathscr{H}$ by $\bip\mathscr{H}=(V,E,\mathcal{E})$. Thus $V$ and $E$ become the color classes of $\bip\mathscr{H}$; in particular, both play the role of vertices. This construction gives a two-to-one correspondence from hypergraphs to bipartite graphs. The two hypergraphs corresponding to the same bipartite graph are called abstract dual. We will denote by $\overline{\mathscr{H}}=(E,V)$ the abstract dual hypergraph of $\mathscr{H}=(V,E)$. Whenever one connected bipartite graph generates two hypergraphs in this way, the interior polynomials of them are the same \cite{KP}. Therefore we may regard the interior polynomial as an invariant of bipartite graphs.

When the bipartite graph $G$ has $k(G)$ components, letting $G=G_1\cup G_2 \cup \cdots \cup G_{k(G)}$, the interior polynomial of $G$ is defined by $I'_{G}\left(x \right)=\left(1-x \right)^{k(G)-1}\prod_{i=1}^{k(G)} I_{G_i}\left(x \right)$. Next we  define the interior polynomial for a signed bipartite graph. Let $G=(V,E,\mathcal{E}_+\cup\mathcal{E}_-)$ be a signed bipartite graph, where $\mathcal{E}_+$ is the positive edge set and $\mathcal{E}_-$ is the negative edge set.
Let $\mathcal{S}$ be a subset of $\mathcal{E}_-$. The unsigned bipartite graph $G \setminus \mathcal{S}$ is obtained from G by deleting all edges in $\mathcal{S}$ and forgetting the signs of the remaining edges. So we may compute the interior polynomial of $G \setminus \mathcal{S}$. We will construct the interior polynomial of a signed bipartite graph as follows.

\begin{definition}
Let $G=(V,E,\mathcal{E}_+\cup\mathcal{E}_-)$ be a signed bipartite graph. We define the signed interior polynomial as
\[
I^{+}_{G}\left(x \right)=\sum_{\mathcal{S} \subseteq \mathcal{E}_{-}}(-1)^{|\mathcal{S}|}I'_{G \setminus \mathcal{S}}(x).
\]
\end{definition}

The abstract theory outlined above may be applied in knot theory as follows. Let $L_{G}$ be the special alternating diagram obtained from the unsigned plane bipartite graph $G=(V,E,\mathcal{E})$ by replacing each edge by a positive crossing. This is known as median construction; see Figure \ref{fig:52} for an example.
\begin{theorem}[T.\ K\'alm\'an, H.\ Murakami and A.\ Postnikov, \cite{KM,KP}]\label{thm:KMP}
For any plane connected bipartite graph $G=(V,E,\mathcal{E})$, we have
\[
\T_{L_G}(v)=v^{|\mathcal{E}|-(|V|+|E|)+1}I_G(v^2).
\]
\end{theorem}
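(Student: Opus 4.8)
The plan is to prove the identity in two stages, following the polytopal route indicated in the introduction and due in its two halves to K\'alm\'an--Postnikov and K\'alm\'an--Murakami. Write $\mathscr{H}=(V,E)$ for the hypergraph with $\bip\mathscr{H}=G$ (so each hyperedge $e\in E$ is the set of its neighbours in $V$), and let $Q_G=\conv\{\,\mathbf{e}_v+\mathbf{e}_e:\{v,e\}\in\mathcal{E}\,\}\subset\R^{V}\oplus\R^{E}$ be the root polytope of $G$; since $G$ is connected, $\dim Q_G=|V|+|E|-2$. Stage one is to show that the interior polynomial is the $h^{*}$-polynomial of $Q_G$, i.e.
\[
\sum_{k\ge 0}\bigl|\,kQ_G\cap\Z^{V\cup E}\,\bigr|\,t^{k}=\frac{I_G(t)}{(1-t)^{|V|+|E|-1}};
\]
stage two is to identify that same $h^{*}$-polynomial with $\T_{L_G}$, with the stated substitution and prefactor.

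For stage one I would invoke the following facts about $Q_G$. It is a compressed lattice polytope, so it admits a unimodular triangulation, and the $h$-polynomial of such a triangulation is independent of the choice; in particular the normalized volume of $Q_G$ equals the number of hypertrees of $\mathscr{H}$. Fixing the order of $E$ used in the definition of $I_G$ and taking a pulling triangulation with the vertices $\mathbf{e}_v+\mathbf{e}_e$ pulled in a compatible order, the induced shelling has the property that its $h$-polynomial coincides term by term with the internal-inactivity generating function $\sum_{\mathbf f}x^{\overline{\iota}(\mathbf f)}=I_G(x)$ summed over hypertrees $\mathbf f$ of $\mathscr{H}$: the shelling number of a maximal simplex matches the number of internally inactive hyperedges of the associated hypertree. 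This is exactly the K\'alm\'an--Postnikov identification of the interior polynomial with the Ehrhart $h^{*}$-vector of the root polytope.

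For stage two I would run the HOMFLY skein relation on $L_G$ itself. The median diagram $L_G$ has Seifert graph $G$, hence $s(L_G)=|V|+|E|$ Seifert circles and $c(L_G)=|\mathcal{E}|$ crossings, so Morton's inequality says the top $z$-degree of $P_{L_G}$ is at most $|\mathcal{E}|-(|V|+|E|)+1$ and $\T_{L_G}(v)$ is precisely the coefficient of $z^{|\mathcal{E}|-(|V|+|E|)+1}$. Iterating the skein relation over a suitably ordered list of crossings yields a resolution tree; one shows that only the states in bijection with the hypertrees of $\mathscr{H}$ contribute to the maximal $z$-degree (this is where parking functions enter, as a device for packaging a hypertree together with an admissible resolution order so that no cancellation survives among these states), and that the powers of $v$ and $z$ accumulated along the state attached to $\mathbf f$ multiply, after factoring out $v^{\,|\mathcal{E}|-(|V|+|E|)+1}z^{\,|\mathcal{E}|-(|V|+|E|)+1}$, to $(v^{2})^{\overline{\iota}(\mathbf f)}$. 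Summing over $\mathbf f$ and comparing with stage one gives $\T_{L_G}(v)=v^{\,|\mathcal{E}|-(|V|+|E|)+1}I_G(v^{2})$.

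The main obstacle is stage two, and within it the control of the skein resolution: the unrestricted resolution tree has exponentially many leaves with heavy cancellation, so one must choose the ordering of the crossings (adapted to the ordering of $E$) carefully enough that the states reaching the maximal $z$-degree are in clean bijection with hypertrees, with their $v$-weights matching the internal-inactivity monomials and with no residual cancellation. The purely combinatorial point in stage one --- reconciling the shelling order on simplices of the triangulation with K\'alm\'an's notion of internal activity of hyperedges --- is also somewhat delicate, but it is a finite bookkeeping argument. Once both are in place, the exponent $|\mathcal{E}|-(|V|+|E|)+1$ and the substitution $x=v^{2}$ come out automatically from the arithmetic of the skein relation together with the normalized-volume count.
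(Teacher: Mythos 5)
The paper does not prove Theorem \ref{thm:KMP}: it is imported verbatim from \cite{KP} and \cite{KM}, and the only ``proof'' the paper offers is the one-sentence roadmap in the introduction (interior polynomial $=$ Ehrhart $h^*$-vector of the root polytope by \cite{KP}; that $h^*$-vector $=\T_{L_G}$ by \cite{KM}). Your two-stage plan reproduces exactly that architecture, and your description of what happens inside each stage --- unimodular triangulations of $Q_G$ and the matching of shelling numbers with internal inactivity for stage one; the skein resolution tree, the bijection between surviving top-degree states and hypertrees, and the role of parking functions for stage two --- is an accurate account of how the cited papers actually argue. The bookkeeping you do carry out is right: $s(L_G)=|V|+|E|$, $c(L_G)=|\mathcal{E}|$, hence Morton's bound places the top coefficient at $z^{|\mathcal{E}|-(|V|+|E|)+1}$, and $\dim Q_G=|V|+|E|-2$ gives the denominator $(1-t)^{|V|+|E|-1}$ consistent with Theorem \ref{lem:serint}.

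That said, as a self-contained proof your proposal has two genuine gaps, both of which you flag yourself. In stage one, the claim that a pulling triangulation compatible with the ordering of $E$ has $h$-polynomial equal to $\sum_{\mathbf f}x^{\overline{\iota}(\mathbf f)}$ is precisely the content of the K\'alm\'an--Postnikov theorem; asserting that ``the shelling number of a maximal simplex matches the number of internally inactive hyperedges'' is naming the needed lemma, not proving it. In stage two, everything hinges on exhibiting a crossing order for which the top-degree states are in cancellation-free bijection with hypertrees and carry the $v$-weights $(v^2)^{\overline{\iota}(\mathbf f)}$; this is the technical core of \cite{KM} and is left entirely open in your sketch. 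So the verdict is: correct route, faithful to the sources the paper relies on, but a proof plan rather than a proof --- which is acceptable here only because the statement is a quoted external result rather than something the paper undertakes to establish.
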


For any signed bipartite graph $G$, the link diagram $L_G$ is obtained from $G$ by replacing edges with positive and negative crossings, as shown in Figure \ref{fig:cro}. The author extended Theorem \ref{thm:KMP} to signed bipartite graphs.

\begin{figure}[htbp]
\begin{tabular}{ccc}
\begin{minipage}{0.3\hsize}
\centering
\includegraphics[width=1.5cm]{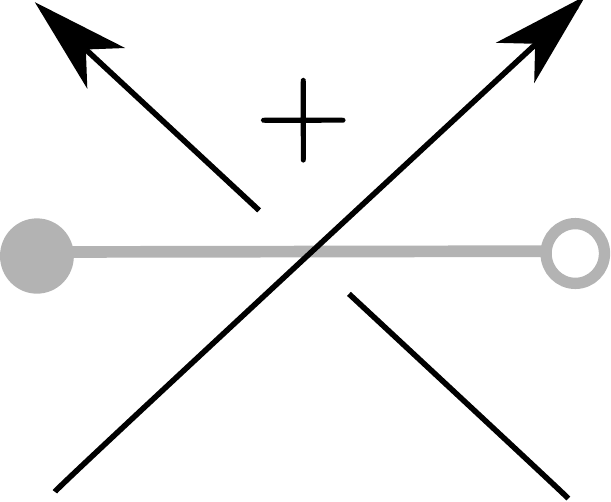} 
\end{minipage}
\begin{minipage}{0.3\hsize}
\centering
\includegraphics[width=1.5cm]{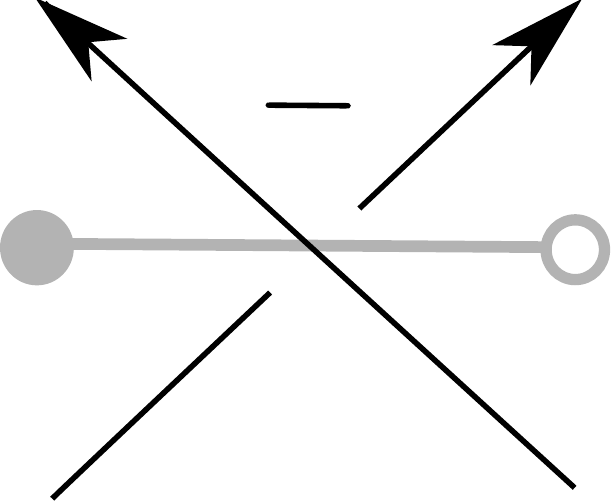}
\end{minipage}
\end{tabular}
\caption{The positive crossing and the negative crossing corresponding to a positive edge and a negative edge, respectively.}\label{fig:cro}
\end{figure}

\begin{theorem}[\cite{kato}]\label{thm:sign}
Let $G=(V,E,\mathcal{E}_+ \cup \mathcal{E}_-)$ be a plane signed bipartite graph. Then we have
\[
\T_{L_G}(v)=v^{|\mathcal{E}_{+}|-|\mathcal{E}_{-}|-(|V|+|E|)+1}I^{+}_{G}(v^2 ).
\]
\end{theorem}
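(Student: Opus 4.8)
The plan is to prove the formula by induction on the number $|\mathcal{E}_-|$ of negative edges, using the HOMFLY skein relation to trade a negative edge for bipartite graphs with strictly fewer negative edges, with Theorem~\ref{thm:KMP} as the base of the induction.

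For $|\mathcal{E}_-|=0$ the signed interior polynomial coincides with $I'_G$. When $G$ is connected this is precisely Theorem~\ref{thm:KMP}. When $G=G_1\cup\dots\cup G_k$ is disconnected, $L_G$ is the split union $L_{G_1}\sqcup\dots\sqcup L_{G_k}$, and I would combine the multiplicativity $P_{L\sqcup L'}(v,z)=\frac{v^{-1}-v}{z}\,P_L(v,z)P_{L'}(v,z)$ of the HOMFLY polynomial with the definition $I'_G=(1-x)^{k-1}\prod_i I_{G_i}$ and Theorem~\ref{thm:KMP} applied to each $G_i$; the factor $v^{-1}-v=v^{-1}(1-v^2)$ supplies exactly the $(1-x)$ factors (with $x=v^2$), and the bookkeeping of the power of $v$ then goes through.

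For the inductive step, fix a negative edge $e$, and let $G^e$ denote the signed bipartite graph obtained from $G$ by switching the sign of $e$ to $+$; both $G^e$ and $G\setminus e$ have one fewer negative edge. In $L_G$ the edge $e$ contributes a negative crossing, and the skein relation there relates $P_{L_G}$, $P_{L_{G^e}}$ (positive crossing) and the HOMFLY polynomial of the oriented smoothing. The smoothing I would identify with $L_{G\setminus e}$, using that an oriented smoothing leaves the set of Seifert circles unchanged, so that the smoothed diagram has Seifert graph $G\setminus e$. Because the crossing numbers and Seifert-circle counts of the three diagrams are under control, Morton's inequality tells us at which power of $z$ each of the three HOMFLY polynomials realizes its top $\T$; extracting the coefficient of $z^{c(L_G)-s(L_G)+1}$ turns the skein relation into a linear identity among $\T_{L_G}$, $\T_{L_{G^e}}$ and $\T_{L_{G\setminus e}}$ with monomial-in-$v$ coefficients. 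Substituting the inductive hypothesis for $G^e$ and $G\setminus e$ (this is why the base case must allow disconnected graphs, since $G\setminus e$ may be disconnected) and cancelling the common power of $v$, the identity should reduce to
\[
I^+_G(x)=I^+_{G^e}(x)-I^+_{G\setminus e}(x),
\]
which holds by the very definition of $I^+$: split the sum $\sum_{\mathcal{S}\subseteq\mathcal{E}_-}(-1)^{|\mathcal{S}|}I'_{G\setminus\mathcal{S}}$ according to whether $e\in\mathcal{S}$, and the $e\notin\mathcal{S}$ part equals $I^+_{G^e}$ while the $e\in\mathcal{S}$ part equals $-I^+_{G\setminus e}$.

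The hard part will be the knot-theoretic bookkeeping in the inductive step: determining the Seifert data of the signed median $L_G$ and of the smoothed diagram, confirming that the smoothing is indeed $L_{G\setminus e}$, and above all checking that the powers of $v$ and the signs produced by the skein relation ---once the inductive hypothesis is inserted--- collapse exactly to the recursion displayed above rather than to some other linear combination. It is the appearance of $|\mathcal{E}_+|-|\mathcal{E}_-|$ (instead of $|\mathcal{E}_+|+|\mathcal{E}_-|$) in the exponent that makes this work, so getting that accounting exactly right is the real content; dealing with the degenerate and disconnected diagrams that inevitably arise when edges are deleted is a secondary technicality to be handled throughout.
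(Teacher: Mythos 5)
This theorem is not proved in the present paper; it is imported from \cite{kato}, so there is no in-text proof to compare against. Your proposal is correct and follows what is evidently the intended argument: induction on $|\mathcal{E}_-|$ with Theorem~\ref{thm:KMP} (extended to split unions via $P_{L\sqcup L'}=\frac{v^{-1}-v}{z}P_LP_{L'}$) as the base case, the skein relation at the negative crossing of $e$ giving a linear relation among $\T_{L_G}$, $\T_{L_{G+\epsilon}}$, $\T_{L_{G\setminus\epsilon}}$, and the recursion $I^+_G=I^+_{G+\epsilon}-I^+_{G\setminus\epsilon}$ closing the induction --- note that this last identity is precisely Lemma~\ref{lem:sumint}, which the paper records from \cite{kato} as the combinatorial counterpart of the skein relation, confirming that your route matches the source.
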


Applying this theorem, for any plane signed bipartite graph, we get properties of the interior polynomial from properties of the HOMFLY polynomial. In sections \ref{sec:mirror} and \ref{sec:fm}, we will extend some of these properties to all signed bipartite graphs.

We recall two properties of the interior polynomial that we need for the proof of our main theorem.
\begin{theorem}[\cite{kato}]\label{recursion}
If an unsigned bipartite graph $G$ contains a cycle $\epsilon_1,\delta_1,\epsilon_2,\delta_2,\cdots,\epsilon_n,\delta_n$, then we have
\[
I'_G(x)=\sum_{\emptyset\neq\mathcal{S}\subset\{\epsilon_1,\epsilon_2,\ldots,\epsilon_n\}}(-1)^{|\mathcal{S}|-1}I'_{G\setminus\mathcal{S}}(x).
\]
\end{theorem}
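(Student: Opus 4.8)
The plan is to translate the statement into the language of root polytopes and prove it as an inclusion--exclusion identity for lattice--point counts. For a connected bipartite graph $G=(V,E,\mathcal{E})$ recall from \cite{KP} that $I_G=I'_G$ is recovered from the Ehrhart polynomial of the root polytope
\[
Q_G=\conv\{\,\mathbf{e}_v-\mathbf{e}_w\mid vw\in\mathcal{E}\,\}\subset\R^{V}\oplus\R^{E},
\]
indexed by standard basis vectors of the two color classes; indeed $I_G$ is the $h^{*}$--polynomial of $Q_G$, a fixed linear function of the coefficients of $\ehr_{Q_G}$. Every edge of $G$ gives one vertex of $Q_G$, and deleting an edge set only discards the corresponding vertices, so $Q_{G\setminus\mathcal{S}}=\conv\{\mathbf{e}_v-\mathbf{e}_w\mid vw\in\mathcal{E}\setminus\mathcal{S}\}$. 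Because the lattice--point enumerator is a valuation, it is enough to prove the single identity of indicator functions
\[
\sum_{\mathcal{S}\subseteq\{\epsilon_1,\dots,\epsilon_n\}}(-1)^{|\mathcal{S}|}\,[Q_{G\setminus\mathcal{S}}]=0 ;
\]
dilating and summing over lattice points turns this into the same relation for $\ehr_{Q_{G\setminus\mathcal{S}}}$, and the linear passage to $h^{*}$ gives the asserted recursion for $I'_G$. The $(1-x)^{k-1}$ factor in the definition of $I'$ is precisely what keeps this passage consistent when a deletion disconnects $G$ and $Q_{G\setminus\mathcal{S}}$ degenerates to a join.

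The geometric content of the cycle is one affine circuit. Let $b_i$ be the vertex of $Q_G$ coming from $\epsilon_i$ and $d_i$ the one coming from $\delta_i$. Because the cycle $\epsilon_1,\delta_1,\dots,\epsilon_n,\delta_n$ visits the two color classes alternately, a direct computation (a cyclic reindexing of the vertices) gives
\[
b_1+\cdots+b_n=d_1+\cdots+d_n .
\]
Thus $\{b_i\}$ and $\{d_i\}$ are the two sides of a circuit, and --- this is the point --- the $d_i$ are never deleted, since $\mathcal{S}$ consists only of $\epsilon$--edges. Writing $B=\{b_1,\dots,b_n\}$ and letting $A$ be the set of all other vertices of $Q_G$, we have $d_1,\dots,d_n\in A$ and $Q_{G\setminus\mathcal{S}}=\conv\bigl(A\cup(B\setminus\mathcal{S})\bigr)$.

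The easy half is that the $Q_{G\setminus\epsilon_i}$ cover $Q_G$. Given $y\in Q_G$, write $y=\sum_{a\in A}\lambda_a a+\sum_{i}\mu_i b_i$ with all coefficients nonnegative and summing to $1$, set $\mu=\min_i\mu_i$, and subtract the vanishing combination $\mu\bigl(\sum_i b_i-\sum_i d_i\bigr)$. Since the $d_i$ lie in $A$, the result is again a convex combination, now with at least one $b_i$ absent, so $y\in\bigcup_i Q_{G\setminus\epsilon_i}$ and hence $Q_G=\bigcup_{i=1}^{n}Q_{G\setminus\epsilon_i}$. The target identity is then exactly the inclusion--exclusion formula for this union, together with the identification of intersections $\bigcap_{i\in K}Q_{G\setminus\epsilon_i}=Q_{G\setminus\{\epsilon_i:i\in K\}}$.

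The main obstacle is this identification of intersections, equivalently the absence of over--counting. Fix $y\in Q_G$ and consider the up--set $U_y=\{\mathcal{T}\subseteq B\mid y\in\conv(A\cup\mathcal{T})\}$; the identity of the first paragraph is equivalent to $\sum_{\mathcal{T}\in U_y}(-1)^{|\mathcal{T}|}=0$ for every $y$. If $U_y$ is principal, with a unique minimal element $\mathcal{T}_0$, this sum telescopes to $(-1)^{|\mathcal{T}_0|}$ when $\mathcal{T}_0=B$ and to $0$ otherwise; the covering from the previous paragraph shows $\mathcal{T}_0\neq B$, so it vanishes. The real work is therefore to prove that $U_y$ is principal, i.e.\ that each $y$ has a well--defined minimal set of \emph{essential} $b_i$. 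This is a statement about the oriented--matroid structure of $Q_G$: one must show that the circuit $\sum b_i=\sum d_i$ is the only relation able to move mass among the $b_i$, so that interference from the other cycles of $G$ cannot create a second minimal representation. I would establish this by induction on $n$, the base case $n=2$ being the elementary flip of the square spanned by $b_1,d_1,b_2,d_2$ along the diagonal $[d_1,d_2]$; the reasoning parallels the convex--hull inclusion--exclusion of Theorem \ref{conv}. Finally, the degenerate deletions --- those that disconnect $G$ and thereby lower $\dim Q_{G\setminus\mathcal{S}}$ --- must be tracked separately and matched against the $I'$ normalization.
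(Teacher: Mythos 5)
The paper does not prove this statement itself; it quotes it from \cite{kato}, so there is no in-paper argument to compare against. Judged on its own terms, your reduction is sound as far as it goes: passing to root polytopes via Theorem \ref{thm:disserint} (the denominator $(1-x)^{|E|+|V|-1}$ is the same for every $(V,E,\mathcal{S})$, so an identity of Ehrhart series does transfer to $I'$), the circuit relation $b_1+\cdots+b_n=d_1+\cdots+d_n$ coming from the alternating cycle is correct, and the covering argument $Q_G=\bigcup_i Q_{G\setminus\epsilon_i}$ is a complete and correct little proof. You have also correctly identified that everything hinges on the up-set $U_y$ being principal, i.e.\ on the intersection property $Q_{G\setminus\mathcal{S}}\cap Q_{G\setminus\mathcal{S}'}=Q_{G\setminus(\mathcal{S}\cup\mathcal{S}')}$ for $\mathcal{S},\mathcal{S}'\subseteq\{\epsilon_1,\dots,\epsilon_n\}$: for a non-principal up-set the alternating sum generically does not vanish (e.g.\ $U$ generated by $\{b_1\}$ and $\{b_2\}$ with $\emptyset\notin U$ gives $-1$), so this really is the whole theorem.

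The gap is that this crucial step is not proved; it is only announced, and the announced strategy does not engage with the actual difficulty. The obstruction is not the length $n$ of the cycle but interference from the rest of $G$: the intersection property is \emph{false} for arbitrary pairs of deleted edges. For example, in the $4$-cycle $u_1w_1u_2w_2$, deleting the two \emph{adjacent} edges $u_1w_1$ and $u_1w_2$ gives two triangles of the square $Q_G$ sharing the boundary edge $[\,\mathbf{u_2}+\mathbf{w_1},\,\mathbf{u_2}+\mathbf{w_2}\,]$, and their intersection is a two-dimensional region strictly larger than that segment, i.e.\ strictly larger than $Q_{G\setminus\{u_1w_1,u_1w_2\}}$. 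So any proof must use that the deleted edges are \emph{alternate} edges of one cycle while all the $\delta_i$ remain present; concretely one has to show that if $y$ admits representations (equivalently, nonnegative flows with the marginals prescribed by $y$) avoiding $\epsilon_i$ for each $i$ in two index sets separately, it admits one avoiding their union, and this requires an argument about circulations/circuits of the whole graph, not just the single square spanned by $b_1,d_1,b_2,d_2$. Your base case ``$n=2$, flip the square'' covers only $A=\{d_1,d_2\}$, and an induction on $n$ leaves $|A|$ and the other circuits of $G$ untouched, so as written the proposal does not close this gap.
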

This is one possible counterpart of the deletion-contraction relation of the Tutte polynomial, in that it enables one to compute the interior polynomial recursively.

Another property of the interior polynomial is related to the skein relation of the HOMFLY polynomial. Let $G$ be a signed bipartite graph and let $\epsilon$ be one of the negative edges in $G$. The bipartite graph $G \setminus \epsilon$ is obtained from $G$ by deleting $\epsilon$ and $G+\epsilon$ is obtained from $G$ by replacing the negative edge $\epsilon$ by a positive edge (see Figure \ref{fig:replace}).

\begin{figure}[htbp]
\begin{tabular}{ccc}
\begin{minipage}{0.33\hsize}
\begin{center}
\includegraphics[width=3.5cm]{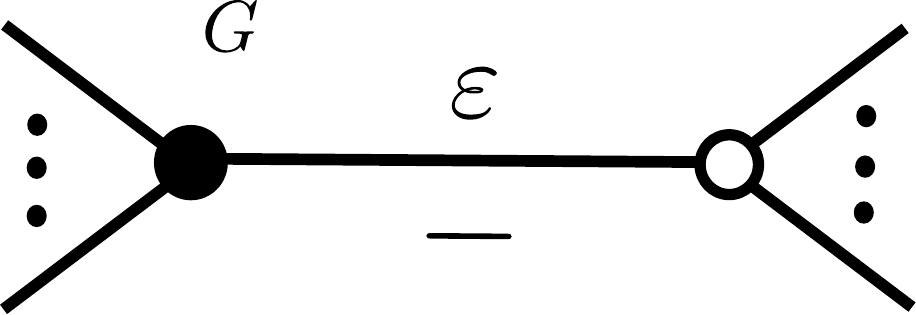}
\end{center}
\end{minipage}
\begin{minipage}{0.33\hsize}
\begin{center}
\includegraphics[width=3.5cm]{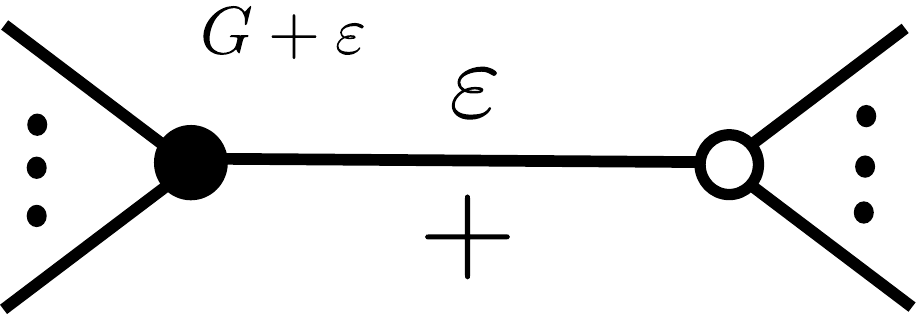}
\end{center}
\end{minipage}
\begin{minipage}{0.33\hsize}
\begin{center}
\includegraphics[width=3.5cm]{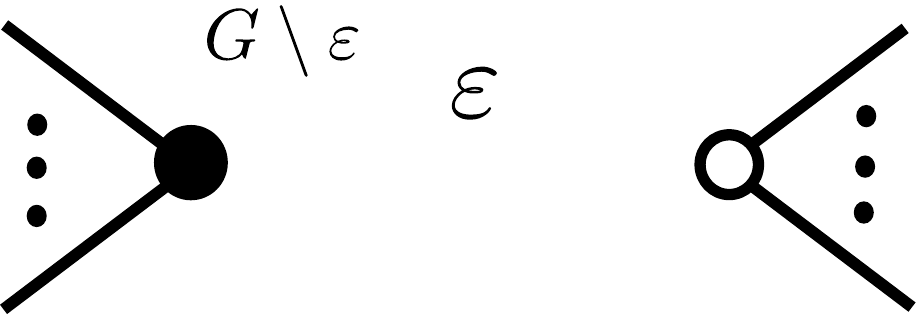}
\end{center}
\end{minipage}
\end{tabular}
\caption{A version of the skein triple for signed bipartite graph.}\label{fig:replace}
\end{figure}

\begin{lemma}[\cite{kato}]\label{lem:sumint}
Let $G$ be a signed bipartite graph and let $\epsilon$ be one of the negative edges in $G$. Then we have $I^+_G(x)=I^+_{G+\epsilon}(x)-I^+_{G \setminus \epsilon}(x)$.
\end{lemma}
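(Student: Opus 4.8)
The plan is to derive the identity directly from the definition of the signed interior polynomial, by splitting its defining alternating sum according to whether the distinguished negative edge $\epsilon$ gets deleted. Writing $G=(V,E,\mathcal{E}_+\cup\mathcal{E}_-)$ with $\epsilon\in\mathcal{E}_-$, the definition gives
\[
I^{+}_{G}(x)=\sum_{\mathcal{S}\subseteq\mathcal{E}_-}(-1)^{|\mathcal{S}|}I'_{G\setminus\mathcal{S}}(x)=\sum_{\substack{\mathcal{S}\subseteq\mathcal{E}_-\\ \epsilon\notin\mathcal{S}}}(-1)^{|\mathcal{S}|}I'_{G\setminus\mathcal{S}}(x)+\sum_{\substack{\mathcal{S}\subseteq\mathcal{E}_-\\ \epsilon\in\mathcal{S}}}(-1)^{|\mathcal{S}|}I'_{G\setminus\mathcal{S}}(x),
\]
and I would analyze the two groups of terms separately.

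For the terms with $\epsilon\notin\mathcal{S}$, the set $\mathcal{S}$ ranges over all subsets of $\mathcal{E}_-\setminus\{\epsilon\}$, which is exactly the negative edge set of $G+\epsilon$. The key point --- and essentially the only thing to check --- is that when $\epsilon$ is not among the deleted edges, the unsigned bipartite graphs $(G+\epsilon)\setminus\mathcal{S}$ and $G\setminus\mathcal{S}$ coincide: both are obtained by removing the edges of $\mathcal{S}$ from the common underlying graph and then forgetting all signs, so whether $\epsilon$ was declared positive or negative is immaterial. Hence this part of the sum equals $\sum_{\mathcal{S}\subseteq\mathcal{E}_-\setminus\{\epsilon\}}(-1)^{|\mathcal{S}|}I'_{(G+\epsilon)\setminus\mathcal{S}}(x)=I^{+}_{G+\epsilon}(x)$. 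For the terms with $\epsilon\in\mathcal{S}$, I would substitute $\mathcal{S}=\{\epsilon\}\cup\mathcal{T}$ with $\mathcal{T}\subseteq\mathcal{E}_-\setminus\{\epsilon\}$; then $(-1)^{|\mathcal{S}|}=-(-1)^{|\mathcal{T}|}$ and $G\setminus\mathcal{S}=(G\setminus\epsilon)\setminus\mathcal{T}$, while $\mathcal{E}_-\setminus\{\epsilon\}$ is precisely the negative edge set of $G\setminus\epsilon$. Thus this part equals $-\sum_{\mathcal{T}\subseteq\mathcal{E}_-\setminus\{\epsilon\}}(-1)^{|\mathcal{T}|}I'_{(G\setminus\epsilon)\setminus\mathcal{T}}(x)=-I^{+}_{G\setminus\epsilon}(x)$, and adding the two contributions yields $I^{+}_{G}(x)=I^{+}_{G+\epsilon}(x)-I^{+}_{G\setminus\epsilon}(x)$.

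I do not expect a serious obstacle: the argument is a reindexing of the defining sum, and the only steps requiring a moment of care are the identification $(G+\epsilon)\setminus\mathcal{S}\cong G\setminus\mathcal{S}$ above and the bookkeeping of which edge set plays the role of ``negative edges'' in $G+\epsilon$ and in $G\setminus\epsilon$. One should also note that the bipartite graphs appearing in these sums may be disconnected, but since $I'$ is defined for disconnected bipartite graphs via the product formula, this causes no difficulty; in particular the identity holds with no planarity hypothesis on $G$.
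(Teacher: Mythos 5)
Your argument is correct and complete: splitting the defining alternating sum over $\mathcal{S}\subseteq\mathcal{E}_-$ according to whether $\epsilon\in\mathcal{S}$, and observing that the unsigned graphs $(G+\epsilon)\setminus\mathcal{S}$ and $G\setminus\mathcal{S}$ coincide when $\epsilon\notin\mathcal{S}$, is exactly the standard derivation of this identity from the definition of $I^+$. The paper quotes the lemma from \cite{kato} without reproducing a proof, but your reindexing is the natural (and essentially the only) way to establish it, and your remarks about disconnectedness and the absence of any planarity hypothesis are accurate.
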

This lemma will be needed in the part of the proof using induction on the number of negative edges.

\subsection{Ehrhart polynomial}
In \cite{KP}, the interior polynomial of an unsigned bipartite graph $G$ is shown to be equivalent to the Ehrhart polynomial of the root polytope of $G$. We review some details. The root polytope of a bipartite graph is defined as follows.
\begin{definition}
Let $G=(V,E,\mathcal{E})$ be a bipartite graph. For $e \in E$ and $v \in V$, let {\bf e} and {\bf v} denote the corresponding standard generators of $\mathbb{R}^E \oplus \mathbb{R}^V$. Define the root polytope of $G$ by
\[
Q_G=\conv \{\, {\bf e} + {\bf v} \,|\, ev \mbox{ is an edge of }G \,\}.
\]
\end{definition}
We know that when $G$ is connected, $\dim Q_G = |V|+|E|-2$ \cite{P}, and let $d=|V|+|E|-2$ in this paper.
\begin{definition}
Let $G=(V,E,\mathcal{E})$ be a bipartite graph and $Q_G$ be the root polytope of $G$. For any positive integer $s$, the Ehrhart polynomial is defined by
\[
\varepsilon_{Q_G}(s) = |(s\cdot Q_G) \cap (\mathbb{Z}^{E}\oplus\mathbb{Z}^{V})|.
\]
\end{definition}
In general, for any polytope $P$, the analogously defined $\varepsilon_{P}(s)$ is not a polynomial. However, for a convex polytope $P$ whose vertices are integer points, $\varepsilon_{P}(s)$ is a polynomial. Thus, $\varepsilon_{Q_G}(s)$ is a polynomial.
\begin{definition}
Let $G$ be a bipartite graph and $\varepsilon_{Q_G}(s)$ be the Ehrhart polynomial of the root polytope $Q_G$. The Ehrhart series is defined by
\[
\ehr_{Q_G}(x)=1+\sum_{s\in \mathbb{N}}\varepsilon_{Q_G}(s)x^s.
\]
\end{definition}
Notice that, for a (bipartite) graph with no edges, we have $\ehr_{Q_G}(x)=1$. Now the Ehrhart series of the root polytope $Q_G$ is equivalent to the interior polynomial of the bipartite graph $G$. 
\begin{theorem}\label{lem:serint}
Let $G=(V,E,\mathcal{E})$ be a connected bipartite graph and $I_G(x)$ be the interior polynomial of $G$. Then
\[
\frac{I_G(x)}{(1-x)^{|E|+|V|-1}}=\ehr_{Q_G}(x).
\]
\end{theorem}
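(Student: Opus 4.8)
The plan is to derive this identity from the general theory of Ehrhart $h^{*}$-vectors together with the combinatorial description of the root polytope, essentially recovering the argument of \cite{KP}. Since $G$ is connected, $Q_{G}$ is a lattice polytope of dimension $d=|E|+|V|-2$ by \cite{P}, and $\varepsilon_{Q_{G}}$ is a polynomial of degree $d$. The standard rational form of the Ehrhart series of a lattice polytope then gives
\[
\ehr_{Q_{G}}(x)=\frac{h^{*}_{Q_{G}}(x)}{(1-x)^{d+1}}=\frac{h^{*}_{Q_{G}}(x)}{(1-x)^{|E|+|V|-1}},
\]
where $h^{*}_{Q_{G}}$ is Stanley's $h^{*}$-polynomial, a polynomial of degree at most $d$ with nonnegative integer coefficients. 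Thus the assertion is equivalent to the identity $h^{*}_{Q_{G}}(x)=I_{G}(x)$, and this is the whole content.

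To compute $h^{*}_{Q_{G}}$ I would use a unimodular triangulation of $Q_{G}$. The vertices of $Q_{G}$ are the points $\mathbf{e}+\mathbf{v}$ for $ev\in\mathcal{E}$; for a spanning tree $\Gamma$ of $G$ the corresponding $|E|+|V|-1$ points are affinely independent and span a $d$-simplex, and this simplex is unimodular because the incidence matrix of a bipartite graph is totally unimodular, so the edge vectors of $\Gamma$ form a $\mathbb{Z}$-basis of the lattice of the affine hull. By \cite{P,KP}, $Q_{G}$ admits a triangulation $\mathcal{T}$ of this type whose maximal simplices are in bijection with the hypertrees of the hypergraph $\mathscr{H}$ with $\bip\mathscr{H}=G$, each hypertree being realized by a spanning tree that induces it. For a unimodular triangulation of a lattice polytope the $f$-to-$h$ transformation yields $h^{*}_{Q_{G}}(x)=h(\mathcal{T};x)$; and since such a triangulation is regular, hence shellable, we have $h(\mathcal{T};x)=\sum_{i}x^{r_{i}}$, where the maximal simplices are listed in a shelling order and $r_{i}$ is the shelling number (the cardinality of the restriction face) of the $i$-th one.

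It remains to match this shelling statistic with K\'alm\'an's definition $I_{G}(x)=\sum_{f}x^{\bar{\iota}(f)}$, the sum over hypertrees $f$, where $\bar{\iota}(f)$ counts the internally active hyperedges of $f$ relative to a fixed ordering of $E$. That ordering induces a shelling order on $\mathcal{T}$, and the core of \cite{KP} is that under the bijection ``maximal simplex $\leftrightarrow$ hypertree'' the shelling number of a simplex equals the internal activity of the corresponding hypertree. Granting this, $h(\mathcal{T};x)=\sum_{f}x^{\bar{\iota}(f)}=I_{G}(x)$, which together with the first paragraph proves the identity. The degenerate cases, namely a single vertex or no edges, are checked directly against the convention $\ehr_{Q_{G}}(x)=1$.

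The main obstacle is exactly this last step: setting up the dictionary between the canonical triangulation of $Q_{G}$ --- the indexing of its maximal simplices by hypertrees, a compatible shelling order, and the size of each restriction face --- and the activity-theoretic definition of $I_{G}$, so that ``shelling number $=$ internal activity'' holds simplex by simplex and independently of the auxiliary orderings. This is the technical heart of the argument; establishing the rational form in the first paragraph and the unimodularity claim in the second is routine by comparison.
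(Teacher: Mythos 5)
The paper never actually proves this statement: it is quoted with only the remark that it is ``implicit in \cite{KP}'', so there is no in-paper argument to compare against. Your outline is a faithful reconstruction of the argument that \cite{KP} supplies, and every structural step is right: $\dim Q_G=|E|+|V|-2$ for connected $G$; the standard rational form $\ehr_{Q_G}(x)=h^{*}_{Q_G}(x)/(1-x)^{d+1}$, which reduces the claim to $h^{*}_{Q_G}=I_G$; unimodularity of the spanning-tree simplices via total unimodularity of the bipartite incidence matrix; the existence of a (regular, hence shellable) triangulation whose maximal cells realize each hypertree exactly once; and the identification of $h^{*}$ with the shelling $h$-vector. You are also candid that the one step carrying all the content --- matching the restriction-face size of each cell with K\'alm\'an's activity statistic for the corresponding hypertree --- is granted rather than proved, so as a self-contained proof this has a gap, but as a reduction to \cite{KP} it is accurate, which is precisely the status the theorem has in the paper. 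Two small points to fix. First, the exponent in K\'alm\'an's definition is the internal \emph{inactivity} $\bar\iota(f)$, the number of internally \emph{inactive} hyperedges, not the number of active ones as your parenthetical gloss says; this is not cosmetic, since the $h$-vector matches the inactivity generating function directly, whereas the activity version would be its reversal. Second, the assertion that the maximal simplices of a single triangulation induce each hypertree exactly once (rather than some hypertree twice and another not at all) is itself a nontrivial lemma of \cite{KP} that your bijection silently relies on; it deserves to be named alongside the shelling-versus-activity dictionary as part of the deferred content.
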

This theorem is implicit in \cite{KP}. The author extended this theorem to any (unsigned but possibly disconnected) bipartite graph.
\begin{theorem}[\cite{kato}]\label{thm:disserint}
Let $G=(V,E,\mathcal{E})$ be a bipartite graph and $I'_G(x)$ be the interior polynomial of $G$. Then
\[
\frac{I'_G(x)}{(1-x)^{|E|+|V|-1}}=\ehr_{Q_G}(x).
\]
\end{theorem}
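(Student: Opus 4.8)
The plan is to reduce the statement for a disconnected $G$ to its connected case, Theorem~\ref{lem:serint}, by showing that the Ehrhart series is multiplicative over connected components. First we dispose of components without edges: such a component is a single vertex, whose deletion leaves $\ehr_{Q_G}$ unchanged and divides both $I'_G$ and $(1-x)^{|E|+|V|-1}$ by $1-x$ (using $I'_G=(1-x)^{k(G)-1}\prod_i I_{G_i}$, the fact that the interior polynomial of a one-vertex graph is $1$, and the noted fact that $\ehr=1$ for an edgeless graph); so we may assume every component of $G$ has an edge, the one-vertex case being immediate (both sides equal $1$). Now write $G=G_1\cup\cdots\cup G_k$ with $k=k(G)$ and $G_i=(V_i,E_i,\mathcal{E}_i)$ connected, and put $W_i=\mathbb{R}^{E_i}\oplus\mathbb{R}^{V_i}$, so that $\mathbb{R}^E\oplus\mathbb{R}^V=\bigoplus_{i=1}^k W_i$ as coordinate blocks and $\mathbb{Z}^E\oplus\mathbb{Z}^V=\bigoplus_{i=1}^k(\mathbb{Z}^{E_i}\oplus\mathbb{Z}^{V_i})$. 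Since the two endpoints of any edge lie in the same component, every generator $\mathbf{e}+\mathbf{v}$ of $Q_G$ lies in the block $W_i$ for which $ev\in\mathcal{E}_i$ and is a generator of $Q_{G_i}$, and conversely; hence $Q_{G_i}\subseteq W_i$ and $Q_G=\conv\bigl(\bigcup_{i=1}^k Q_{G_i}\bigr)$, so that $sQ_G=\bigl\{\sum_{i=1}^k t_i p_i : p_i\in Q_{G_i},\ t_i\ge 0,\ \sum_i t_i=s\bigr\}$ for every positive integer $s$.

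The heart of the proof is the identity $\ehr_{Q_G}(x)=\prod_{i=1}^k\ehr_{Q_{G_i}}(x)$. Fix $s\ge 1$ and a lattice point $a\in sQ_G\cap(\mathbb{Z}^E\oplus\mathbb{Z}^V)$, and write $a=\sum_i t_i p_i$ with $t_i\ge 0$, $\sum_i t_i=s$, $p_i\in Q_{G_i}$. As the $W_i$ are complementary coordinate blocks, the $W_i$-component of $a$ equals $a_i:=t_i p_i\in\mathbb{Z}^{E_i}\oplus\mathbb{Z}^{V_i}$. Every generator of $Q_{G_i}$, hence all of $Q_{G_i}$, lies in the hyperplane $\{\sum_{e\in E_i}x_e=1\}$ of $W_i$; consequently $t_i=\sum_{e\in E_i}(a_i)_e$, a nonnegative \emph{integer} because $a_i$ is a lattice point, and $t_i$ is thereby determined by $a$ alone. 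If $t_i\ge 1$ then $a_i$ is one of the $\varepsilon_{Q_{G_i}}(t_i)$ lattice points of $t_iQ_{G_i}$; if $t_i=0$ then $a_i=0$. Conversely, any tuple of lattice points $a_i\in t_iQ_{G_i}$ with $\sum_i t_i=s$ reassembles into a lattice point of $sQ_G$. Writing $[x^t]F$ for the coefficient of $x^t$ in a series $F$ and using $[x^0]\ehr_{Q_{G_i}}=1$, we conclude that for every $s\ge 1$
\[
\varepsilon_{Q_G}(s)=\sum_{\substack{t_1,\dots,t_k\ge 0\\ t_1+\dots+t_k=s}}\ \prod_{i=1}^k [x^{t_i}]\,\ehr_{Q_{G_i}}(x)=[x^s]\prod_{i=1}^k\ehr_{Q_{G_i}}(x);
\]
since the $x^0$-coefficients also agree ($1=\prod_i 1$), this is exactly the claimed product identity for $\ehr$.

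Finally we assemble. By Theorem~\ref{lem:serint}, $\ehr_{Q_{G_i}}(x)=I_{G_i}(x)/(1-x)^{|E_i|+|V_i|-1}$ for each $i$, hence
\[
\ehr_{Q_G}(x)=\prod_{i=1}^k\frac{I_{G_i}(x)}{(1-x)^{|E_i|+|V_i|-1}}=\frac{\prod_{i=1}^k I_{G_i}(x)}{(1-x)^{|E|+|V|-k}}=\frac{I'_G(x)}{(1-x)^{|E|+|V|-1}},
\]
where we used $\sum_{i=1}^k(|E_i|+|V_i|)=|E|+|V|$ and the definition $I'_G(x)=(1-x)^{k-1}\prod_{i=1}^k I_{G_i}(x)$.

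The one point I expect to need care is the integrality observation: a lattice point of $sQ_G$ must restrict, on each block $W_i$, to a lattice point of an \emph{integer} dilate $t_iQ_{G_i}$, the $t_i$ summing to $s$. This is precisely what turns the lattice-point count of $sQ_G$ into a convolution of the counts for the $Q_{G_i}$, and it relies on each root polytope of an edge-containing graph lying in the affine hyperplane where the $E$-coordinates, and also the $V$-coordinates, sum to $1$; the remaining steps are bookkeeping. Equivalently, one could note that $Q_G$ is a repeated join of the $Q_{G_i}$ and invoke multiplicativity of $h^*$-polynomials under join, but the direct count above is self-contained.
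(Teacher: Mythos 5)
The paper does not prove this statement itself; it imports it from \cite{kato}, so there is no in-paper argument to compare against. Your proof is correct and complete: the reduction to components via the observation that $Q_G$ is the join of the $Q_{G_i}$ sitting in complementary coordinate blocks, the integrality of the weights $t_i=\sum_{e\in E_i}(a_i)_e$ forced by the hyperplane $\sum_{e\in E_i}x_e=1$, and the resulting convolution identity $\ehr_{Q_G}=\prod_i\ehr_{Q_{G_i}}$ combine correctly with Theorem \ref{lem:serint} and the definition $I'_G=(1-x)^{k(G)-1}\prod_i I_{G_i}$ to give the claim, and this is essentially the standard route (multiplicativity of the Ehrhart series under join) that the cited source follows. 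Your preliminary disposal of edgeless components, where $Q_{G_i}=\emptyset$ and the join description would otherwise degenerate, is a detail worth having made explicit.
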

\begin{definition}
Let $G=(V,E,\mathcal{E}_+\cup\mathcal{E}_-)$ be a signed bipartite graph.
We define the signed Ehrhart series as
\[
\ehr^+_{G}(x)=\sum_{\mathcal{S} \subseteq \mathcal{E}_{-}}(-1)^{|\mathcal{S}|}\ehr_{Q_{G \setminus \mathcal{S}}}\left(x \right),
\]
where the graph $G\setminus\mathcal{S}$ is treated as unsigned.
\end{definition}
Now the signed interior polynomial is equivalent to the signed Ehrhart series.
\begin{theorem}[\cite{kato}]\label{thm:signserint}
Let $G=(V,E,\mathcal{E})$ be a signed bipartite graph and $I^+_G(x)$ be the signed interior polynomial of $G$. Then
\[
\frac{I^+_G(x)}{(1-x)^{|E|+|V|-1}}=\ehr^+_{G}(x).
\]
\end{theorem}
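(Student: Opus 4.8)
The plan is to obtain Theorem~\ref{thm:signserint} as a purely formal consequence of Theorem~\ref{thm:disserint}, by applying the latter termwise in the alternating sum over $\mathcal{S}\subseteq\mathcal{E}_-$ that defines both $I^+_G$ and $\ehr^+_G$. The first thing I would record is the elementary observation that for every $\mathcal{S}\subseteq\mathcal{E}_-$ the graph $G\setminus\mathcal{S}$ is obtained by deleting edges only, so its two color classes are still $E$ and $V$. Consequently the exponent $|E|+|V|-1$ is the same for $G\setminus\mathcal{S}$ as for $G$, even if $G\setminus\mathcal{S}$ becomes disconnected or acquires isolated vertices. This is exactly what allows the single factor $(1-x)^{|E|+|V|-1}$ occurring in Theorem~\ref{thm:disserint} to be pulled out of every summand simultaneously.

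With that in hand the computation is short. Starting from the definition of the signed Ehrhart series and applying Theorem~\ref{thm:disserint} to each term,
\[
\ehr^+_G(x)=\sum_{\mathcal{S}\subseteq\mathcal{E}_-}(-1)^{|\mathcal{S}|}\ehr_{Q_{G\setminus\mathcal{S}}}(x)
=\sum_{\mathcal{S}\subseteq\mathcal{E}_-}(-1)^{|\mathcal{S}|}\frac{I'_{G\setminus\mathcal{S}}(x)}{(1-x)^{|E|+|V|-1}}.
\]
Factoring out the common denominator and invoking the definition of the signed interior polynomial then gives
\[
\ehr^+_G(x)=\frac{1}{(1-x)^{|E|+|V|-1}}\sum_{\mathcal{S}\subseteq\mathcal{E}_-}(-1)^{|\mathcal{S}|}I'_{G\setminus\mathcal{S}}(x)
=\frac{I^+_G(x)}{(1-x)^{|E|+|V|-1}},
\]
which is the assertion.

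I do not expect a real obstacle here: all of the substance sits in Theorem~\ref{thm:disserint} (and behind it Theorem~\ref{lem:serint} and \cite{KP}), and once that is available the signed statement follows by taking alternating sums on both sides. The only point that needs a little care is the bookkeeping of the power of $(1-x)$, i.e.\ checking that the normalizations of $I'$ and of $\ehr_{Q}$ on edgeless or disconnected graphs are precisely those for which Theorem~\ref{thm:disserint} is stated with exponent $|E|+|V|-1$ computed from the full vertex sets; granted that (as above), nothing further is required. As an alternative one could induct on $|\mathcal{E}_-|$: the base case $|\mathcal{E}_-|=0$ is Theorem~\ref{thm:disserint}, and the inductive step would use Lemma~\ref{lem:sumint} for $I^+_G$ together with the analogous identity $\ehr^+_G=\ehr^+_{G+\epsilon}-\ehr^+_{G\setminus\epsilon}$, which itself drops out of the definition of $\ehr^+$ by splitting the sum according to whether $\epsilon\in\mathcal{S}$. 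The direct computation above, however, is cleaner and is the route I would take.
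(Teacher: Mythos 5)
Your derivation is correct and is essentially the intended one: the paper cites this result from \cite{kato} without reproducing a proof, but it states Theorem~\ref{thm:disserint} for possibly disconnected graphs immediately beforehand precisely so that it can be applied termwise to each $G\setminus\mathcal{S}$, with the uniform exponent $|E|+|V|-1$ justified exactly as you say (deletion of edges leaves the color classes $E$ and $V$ intact). Nothing further is needed.
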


\section{Subgraph expansion of the interior polynomial}\label{sec:subgraph}
Before we show Theorem \ref{thm:signchange}, we prove the following property of the interior polynomial for unsigned bipartite graphs. 
\begin{theorem}\label{thm:subgraph}
Let $G=(V,E,\mathcal{E})$ be a bipartite graph. For any edge set $\mathcal{S}\subseteq \mathcal{E}$, we may consider the subgraph $\mathcal{S}=(V,E,\mathcal{S})$. Then,
\[
(-x)^{|E|+|V|-1}I'_G(1/x)=
\sum_{\mathcal{S}\subseteq\mathcal{E}}(-1)^{|\mathcal{S}|}I'_{\mathcal{S}}(x).
\]
\end{theorem}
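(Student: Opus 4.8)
Here I describe how I would deduce Theorem~\ref{thm:subgraph} from Theorem~\ref{conv} and Ehrhart reciprocity, with Theorem~\ref{thm:disserint} as the bridge.

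My plan is to rewrite everything in terms of Ehrhart series. Put $N=|E|+|V|-1$. Every subgraph $(V,E,\mathcal S)$ shares the color classes $V$ and $E$ with $G$, so Theorem~\ref{thm:disserint} gives $I'_G(x)=(1-x)^{N}\ehr_{Q_G}(x)$ and $I'_{\mathcal S}(x)=(1-x)^{N}\ehr_{Q_{\mathcal S}}(x)$ for every $\mathcal S\subseteq\mathcal E$. Using the elementary identity $(-x)^{N}(1-1/x)^{N}=(1-x)^{N}$, the claimed equality becomes, after dividing by $(1-x)^{N}$ and working with rational functions,
\[
\ehr_{Q_G}(1/x)=\sum_{\mathcal S\subseteq\mathcal E}(-1)^{|\mathcal S|}\,\ehr_{Q_{\mathcal S}}(x).
\]
So it suffices to establish this last identity. (The degenerate case $\mathcal E=\emptyset$, where both sides reduce to $1$ by the convention $\ehr_{Q_\emptyset}=1$, is checked directly.)

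For the next step I would apply Theorem~\ref{conv} to the vertex set $X=\{\,\mathbf e+\mathbf v\mid ev\in\mathcal E\,\}$ of $Q_G$. Distinct edges of $G$ produce distinct points of $X$, so $\mathcal S\mapsto\{\,\mathbf e+\mathbf v\mid ev\in\mathcal S\,\}$ is a bijection from subsets of $\mathcal E$ onto subsets of $X$, under which $\conv S=Q_{\mathcal S}$. Since $s\cdot\interior(\conv X)=\interior(\conv(sX))$ and $s\cdot\conv S=\conv(sS)$ for $s\ge1$, evaluating the indicator identity \eqref{eq:ind} at the points of $\tfrac1s(\Z^{E}\oplus\Z^{V})$ and summing — equivalently, counting lattice points inside the $s$-fold dilates — yields, for every integer $s\ge1$,
\[
(-1)^{\dim Q_G}\,\varepsilon^{\circ}_{Q_G}(s)=\sum_{\emptyset\ne\mathcal S\subseteq\mathcal E}(-1)^{|\mathcal S|-1}\,\varepsilon_{Q_{\mathcal S}}(s),
\]
where $\varepsilon^{\circ}_{Q_G}(s)$ is the number of lattice points in the relative interior of $sQ_G$ (the $S=\emptyset$ term of \eqref{eq:ind} vanishes, as $\conv\emptyset=\emptyset$).

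Finally I would multiply by $x^{s}$, sum over $s\ge1$, and invoke Ehrhart reciprocity in the form $\sum_{s\ge1}\varepsilon^{\circ}_{Q_G}(s)x^{s}=(-1)^{\dim Q_G+1}\ehr_{Q_G}(1/x)$: the two factors $(-1)^{\dim Q_G}$ cancel, so the left side collapses to $-\ehr_{Q_G}(1/x)$. On the right side $\sum_{s\ge1}\varepsilon_{Q_{\mathcal S}}(s)x^{s}=\ehr_{Q_{\mathcal S}}(x)-1$; absorbing the constant terms with $\sum_{\emptyset\ne\mathcal S\subseteq\mathcal E}(-1)^{|\mathcal S|-1}=1$ and reinstating the vanishing $\mathcal S=\emptyset$ summand $\ehr_{Q_\emptyset}(x)=1$ rewrites the right side as $-\sum_{\mathcal S\subseteq\mathcal E}(-1)^{|\mathcal S|}\ehr_{Q_{\mathcal S}}(x)$. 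Comparing the two sides gives the displayed Ehrhart identity, hence Theorem~\ref{thm:subgraph}. The geometric substance of the argument is carried entirely by Theorem~\ref{conv} and by Ehrhart reciprocity, both available at this point, so the one place I expect friction is this concluding bookkeeping: keeping straight the relative interiors, the $\mathcal S=\emptyset$ contributions, and the accumulated signs and powers of $(1-x)$.
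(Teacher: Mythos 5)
Your proposal is correct and follows essentially the same route as the paper: apply Theorem~\ref{conv} to the vertex set of the root polytope, pass to lattice-point counts in dilates, combine with Ehrhart reciprocity to get $\ehr_{Q_G}(1/x)=\sum_{\mathcal S\subseteq\mathcal E}(-1)^{|\mathcal S|}\ehr_{Q_{\mathcal S}}(x)$, and translate back via Theorem~\ref{thm:disserint}. Your version is slightly cleaner in two respects --- the $(-1)^{\dim Q_G}$ factors cancel without ever computing the dimension, and the constant-term and $\mathcal S=\emptyset$ bookkeeping is made explicit --- but the substance is identical.
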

This theorem is equivalent to Theorem \ref{thm:signchange} when the signed bipartite graph $G$ has only positive edges. To prove Theorem \ref{thm:subgraph}, we need two other theorems. The first is a well known formula, called Ehrhart reciprocity \cite[Theorem 4.4]{CCD}. Here a rational convex polytope is a convex polytope whose vertices have only rational coordinates. The root polytope of any bipartite graph is a rational convex polytope.
\begin{theorem}[Ehrhart reciprocity]\label{ER}
Let $P$ be a rational convex polytope. Then,
\[
\ehr_P(1/x)=(-1)^{\dim{P}+1}\ehr_{\interior{P}}(x).
\]
\end{theorem}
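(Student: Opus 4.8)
The plan is to deduce the formula from Stanley reciprocity for the lattice-point generating function of a pointed rational cone, applied to the cone over $P$. So, first I would homogenize: say $P\subseteq\mathbb{R}^n$ and place it at height one, letting $K=\operatorname{cone}(P\times\{1\})\subseteq\mathbb{R}^{n+1}$ be the cone it generates, a pointed rational cone of dimension $\dim P+1$ (rational because $P$ is). Intersecting $K$ with the hyperplane $\{x_{n+1}=t\}$ produces the dilate $t\cdot P$ for every integer $t\ge 0$, while intersecting $\interior K$ with that hyperplane produces $t\cdot\interior P$ for $t\ge 1$ and the empty set for $t=0$ (the apex is the only lattice point of $K$ at height $0$). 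Writing $\sigma_C(\mathbf z)=\sum_{m\in C\cap\mathbb{Z}^{n+1}}\mathbf z^m$ for the lattice-point transform of a cone $C$ (a rational function when $C$ is pointed), these slicing facts show that the substitution $z_1=\dots=z_n=1$, $z_{n+1}=x$ carries $\sigma_K$ to $\ehr_P(x)$ (the constant term $1$ being the apex) and carries $\sigma_{\interior K}$ to $\ehr_{\interior P}(x)$, which has no constant term, exactly as the homogeneity of the asserted identity demands.

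Second I would invoke Stanley reciprocity: for any pointed rational cone $C$ one has $\sigma_C(1/\mathbf z)=(-1)^{\dim C}\,\sigma_{\interior C}(\mathbf z)$ as rational functions. Applied to $C=K$ and followed by the substitution above, under which $\mathbf z\mapsto 1/\mathbf z$ becomes $x\mapsto 1/x$, this gives precisely $\ehr_P(1/x)=(-1)^{\dim P+1}\ehr_{\interior P}(x)$. The genuine content is therefore the cone reciprocity, which I would prove in the usual two steps. First, pass from a triangulation of $C$ into simplicial rational subcones to a disjoint \emph{half-open} decomposition compatible with taking relative interiors, so that $\sigma_C$ and $\sigma_{\interior C}$ each split as a sum over the pieces and one is reduced to a single half-open simplicial cone $C=\mathbb{R}_{\ge 0}v_1+\dots+\mathbb{R}_{\ge 0}v_k$. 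Then, on such a cone, every lattice point is uniquely $p+\sum_i n_i v_i$ with $n_i\in\mathbb{Z}_{\ge 0}$ and $p$ in the half-open fundamental parallelepiped $\Pi$, so $\sigma_C(\mathbf z)=\bigl(\sum_{p\in\Pi\cap\mathbb{Z}^{n+1}}\mathbf z^p\bigr)\big/\prod_{i=1}^k(1-\mathbf z^{v_i})$, and likewise for $\interior C$ with the opposite half-open parallelepiped; the bijection $p\mapsto(v_1+\dots+v_k)-p$ between the lattice points of these two parallelepipeds, combined with $1-\mathbf z^{-v_i}=-\mathbf z^{-v_i}(1-\mathbf z^{v_i})$, then yields $\sigma_C(1/\mathbf z)=(-1)^k\sigma_{\interior C}(\mathbf z)$ with $k=\dim C$.

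I expect the only real obstacle to be technical rather than conceptual, namely justifying the substitution $z_1=\dots=z_n=1$: it is legitimate because every ray generator of $K$ has positive last coordinate, so the denominator factors $1-\mathbf z^{v_i}$ do not vanish under it for generic $x$, and the two rational functions of $x$ that result are exactly $\ehr_P(x)$ and $\ehr_{\interior P}(x)$. Making this precise --- or, equivalently, carrying one auxiliary variable through and specializing only at the very end --- is the fussy part, and everything else reduces to sign bookkeeping.
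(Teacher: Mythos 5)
Your argument is correct, but note that the paper offers no proof of this statement at all: it is quoted as a known result with a citation to \cite[Theorem 4.4]{CCD}. What you have written out --- coning over $P$ at height one, specializing the integer-point transform to recover $\ehr_P$ and $\ehr_{\interior P}$, and reducing Stanley reciprocity to half-open simplicial cones via the fundamental-parallelepiped bijection $p\mapsto (v_1+\dots+v_k)-p$ --- is essentially the standard proof found in that reference, so your proposal agrees with the paper's (implicit) source rather than diverging from it.
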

The second theorem is Theorem \ref{conv}. For any set $A \subset \mathbb{R}^{n}$, let the function $[A]:\mathbb{R}^{n} \to \mathbb{R}$ be defined by 
\begin{equation*}
[A](x) = \begin{cases}
           1 & (x \in A) \\
           0 & (x \notin A)
         \end{cases}.
\end{equation*}
We call this function the indicator function of $A$. Now we will prove Theorem \ref{conv}.

\begin{proof}[Proof of Theorem \ref{conv}]
When $x\notin \conv X$, it is clear that $[\conv S](x)=0$ for any subset $S\subset X$. Therefore, both sides of \ref{eq:ind} are equal to $0$.

Now, for $x \in \conv X$, we have to show
\begin{eqnarray*}
\sum_{S\subseteq X}(-1)^{|S|-1}[\conv S](x)
=\begin{cases}
        (-1)^{\dim(\conv X)}& (x \in \interior(\conv X)) \\
         0                  & (x \in \partial (\conv X))
 \end{cases},
\end{eqnarray*}
where $\partial A = \closure(A)\setminus\interior A$ stands for relative boundary.

First we do this in the boudary case $x\in \partial(\conv X)$. Let $X'\subset X$ be the set of all elements of $X$ along the minimal face of $\conv X$ containing $x$. As $x\in \partial(\conv X)$, we have $X'\ne X$. For all $S\subseteq X$ such that $x\in \conv S$, we also have $x\in\conv(S\cap X')$. For any $S'\subseteq X'$ such that $x\in \conv S'$, there is even number of $S\subseteq X$ such that $S'=S\cap X'$ and the alternating sum of their indicators at $x$ is $0$. This completes the proof in the case $x\in\partial(\conv X)$.

It remains to show (\ref{eq:ind}) in case $x \in \interior(\conv X)$. When $X$ is affine independent, the claim is obvious (note that $\dim(\conv X)=|X|-1$). When $x_0,\ldots,x_{n}$ are affine dependent, we apply induction through decreasing dimension. (Formally, the induction is on $|X|-\dim(\conv X)$. The affine independent case is when this value is $1$. The value of $|X|$ will stay fixed.) Let $y_i=(x_i, \epsilon_i)$ for $i=0,\ldots,n$ and $Y=\{y_0,\ldots,y_{n}\}$. We will denote by $p:\R^{d+1}\rightarrow \R^d$ the projection such that $p(y_i)=x_i$ for $i=0,\ldots,n$. There exists $\{\epsilon_0,\ldots,\epsilon_{n}\}$ such that $\dim(\conv Y)=\dim(\conv X)+1$. We assume that, for $y\in\conv Y$, we have
\begin{eqnarray*}
\sum_{S\subseteq Y}(-1)^{|S|-1}[\conv S](y)
&=&(-1)^{\dim(\conv Y)}[\interior{(\conv Y)}](y)\\
&=&\begin{cases}
        (-1)^{\dim(\conv Y)}& (y \in \interior(\conv Y)) \\
         0                  & (y \in \partial (\conv Y))
   \end{cases}.
\end{eqnarray*}
The set $p^{-1}(x)\cap\conv Y$ is a segment of positive length and it intersects some hyperplanes $\aff {S'}$ (for suitable $S'\subseteq Y$) at the points $a_j$ ($1\le j\le k$). We take the indices of the $a_j$ to satisfy $j_1< j_2$ if and only if $\epsilon_{j_1}<\epsilon_{j_2}$, where $\epsilon_{j}$ is the last coordinate of $a_j$. Let $A=\{a_1,\ldots,a_k\}$. We define $B=\{b_1,\ldots,b_{k-1}\}$, where $b_j=(a_j+a_{j+1})/2$ ($1\le j \le k-1$). For any $S'\subseteq Y$, the set $\conv S' \cap p^{-1}(x)$ is either empty, a point, or a segment of $\R ^{d+1}$. When $\conv S' \cap p^{-1}(x)$ is a point, we have $|\{a_j|a_j\in \conv S'\}|=1,|\{b_j|b_j\in \conv S'\}|=0$. When $\conv S' \cap p^{-1}(x)$ is a segment, we have $|\{a_j|a_j\in \conv S'\}|=|\{b_j|b_j\in \conv S'\}|+1$. In both cases, $|\{a_j|a_j\in \conv S'\}|-|\{b_j|b_j\in \conv S'\}|=1$.
So, we have
\begin{eqnarray*}
\sum_{S\subseteq X}(-1)^{|S|-1}[\conv S](x)
&=&\sum_{S \subseteq X \atop x\in \conv S}(-1)^{|S|-1}\\
&=&\sum_{S' \subseteq Y \atop \conv S' \cap p^{-1}(x)\ne\emptyset}(-1)^{|S'|-1}\\
&=&\sum_{S' \subseteq Y}(-1)^{|S'|-1}\left(|\{a_j|a_j\in \conv S'\}|-|\{b_j|b_j\in \conv S'\}|\right)\\
&=&\sum_{S' \subseteq Y}(-1)^{|S'|-1}\left(\sum_{j=1}^{k}[\conv S'](a_j)-\sum_{j=1}^{k-1}[\conv S'](b_j)\right)\\
&=&\sum_{j=1}^{k}\sum_{S' \subseteq Y}(-1)^{|S'|-1}[\conv S'](a_j)-\sum_{j=1}^{k-1}\sum_{S' \subseteq Y}(-1)^{|S'|-1}[\conv S'](b_j).
\end{eqnarray*}
By using the inductive hypothesis, and the already established boundary case for $a_1$ and $a_k$, we obtain
\begin{eqnarray*}
\sum_{S\subseteq X}(-1)^{|S|-1}[\conv S](x)
&=&\sum_{j=1}^{k}(-1)^{\dim \conv Y}[\interior \conv Y](a_j)-\sum_{j=1}^{k-1}(-1)^{\dim\conv Y}[\interior \conv Y](b_j)\\
&=&(k-2)(-1)^{\dim \conv Y}-(k-1)(-1)^{\dim\conv Y}\\
&=&(-1)^{\dim \conv Y -1}\\
&=&(-1)^{\dim \conv X}.
\end{eqnarray*}
This completes the proof by induction.
\end{proof}
Now we are in a position to prove Theorem \ref{thm:subgraph}.

\begin{proof}[Proof of Theorem \ref{thm:subgraph}]
We start with the connected case. First, we apply Theorem \ref{ER} to the root polytope $Q_G$ of the bipartite graph $G$ to obtain
\begin{equation}\label{eq:RR}
\ehr_{Q_G}(1/x)=(-1)^{d+1}\ehr_{\interior{Q_G}}(x).
\end{equation}
For any $\mathcal{S}\subseteq\mathcal{E}$, we will denote by $Q_\mathcal{S}$ the root polytope of the bipartite graph $(V,E,\mathcal{S})$. Since the root polytopes $Q_G,Q_{\mathcal{S}}$ ($\mathcal{S}\subseteq\mathcal{E}$) are convex hulls, Theorem \ref{conv} implies
\[
(-1)^{d}[\interior{Q_G}]=\sum_{\mathcal{S}\subseteq \mathcal{E}}(-1)^{|\mathcal{S}|-1}[Q_{\mathcal{S}}].
\]
By the definition of the Ehrhart polynomial, for any $s\in\N$, we have
\[
(-1)^{d}\varepsilon_{\interior{Q_G}}(s)
=\sum_{\mathcal{S}\subseteq \mathcal{E}}(-1)^{|\mathcal{S}|-1}\varepsilon_{Q_{\mathcal{S}}}(s).
\]
By the definition of the Ehrhart series, we have
\begin{equation}\label{eq:IS}
(-1)^{d}\ehr_{\interior{Q_G}}(x)
=\sum_{\mathcal{S}\subseteq \mathcal{E}}(-1)^{|\mathcal{S}|-1}\ehr_{Q_\mathcal{S}}(x).
\end{equation}
From equations (\ref{eq:RR}) and (\ref{eq:IS}), we obtain
\[
\ehr_{Q_G}(1/x)=\sum_{\mathcal{S}\subseteq \mathcal{E}}(-1)^{|\mathcal{S}|}\ehr_{Q_\mathcal{S}}(x).
\]
Now Theorem \ref{thm:disserint} yields 
\[
\frac{I'_G(1/x)}{(1-1/x)^{d+1}}
=\sum_{\mathcal{S}\subseteq\mathcal{E}}(-1)^{|\mathcal{S}|}\frac{I'_{\mathcal{S}}(x)}{(1-x)^{d+1}},
\]
from which we get
\[
(-x)^{d+1}I'_G(1/x)=\sum_{\mathcal{S}\subseteq\mathcal{E}}(-1)^{|\mathcal{S}|}I'_{\mathcal{S}}(x),
\]
which completes the proof. For a disconnected bipartite graph, the claim follow from the connected case and the definition of $I'$. 
\end{proof}
\section{Mirroring formula}\label{sec:mirror}
The goal of this section is to extend Theorem \ref{thm:subgraph} to Theorem \ref{thm:signchange}. First we recall the motivation behind both of these statements. For any link, the mirror image is obtained by reflecting it in a plane. When the link is given by a diagram $D$, we may take the plane to be projection plane and write $D^*$ for the mirror image. The following property of the HOMFLY polynomial is well known.
\begin{theorem}\label{thm:mirror}
Let $D^*$ be the mirror image of the link diagram $D$. Then,
\[
P_{D^*}(v,z)=P_D(-v^{-1},z).
\]
\end{theorem}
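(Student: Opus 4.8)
The plan is to recognize $D\mapsto P_{D^*}(-v^{-1},z)$ as a link invariant satisfying the HOMFLY defining relations and then to invoke the uniqueness half of the HOMFLY existence--uniqueness theorem. Set $Q_D(v,z):=P_{D^*}(-v^{-1},z)$. Since reflecting in the projection plane is a well defined operation on ambient isotopy classes of oriented links (a Reidemeister move between $D$ and $D'$ reflects to a Reidemeister move between $D^*$ and $(D')^*$), and $P$ is a link invariant, $Q$ is again an invariant of oriented links. Moreover $Q_{\text{unknot}}(v,z)=P_{\text{unknot}}(-v^{-1},z)=1$, because the mirror of the unknot is the unknot and $P_{\text{unknot}}\equiv 1$. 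So the only thing to verify is that $Q$ obeys the HOMFLY skein relation.

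To do this I would fix a skein triple: let $D_+$, $D_-$, $D_0$ be the diagrams that have, respectively, a positive crossing, a negative crossing, and the oriented smoothing at one marked spot, and that agree everywhere else. Reflection reverses every crossing and leaves the oriented smoothing unchanged, so in $(D_+)^*$ the marked spot is a negative crossing, in $(D_-)^*$ it is a positive crossing, and $(D_0)^*$ is their common smoothing. Applying the HOMFLY skein relation to the triple $\bigl((D_-)^*,(D_+)^*,(D_0)^*\bigr)$, with $(D_+)^*$ in the role of the negative crossing and $(D_-)^*$ in the role of the positive crossing, gives
\[
v^{-1}P_{(D_+)^*}(v,z)-v\,P_{(D_-)^*}(v,z)=z\,P_{(D_0)^*}(v,z).
\]
Substituting $v\mapsto -v^{-1}$, the coefficients become $(-v^{-1})^{-1}=-v$ and $-(-v^{-1})=v^{-1}$, and after rearranging one obtains
\[
v^{-1}P_{(D_-)^*}(-v^{-1},z)-v\,P_{(D_+)^*}(-v^{-1},z)=z\,P_{(D_0)^*}(-v^{-1},z),
\]
i.e.\ $v^{-1}Q_{D_-}(v,z)-v\,Q_{D_+}(v,z)=z\,Q_{D_0}(v,z)$, which is precisely the HOMFLY skein relation for $Q$.

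By the uniqueness part of the HOMFLY theorem --- the skein relation together with the value on the unknot determines the invariant on every diagram, via the standard induction that reduces any diagram to an unlink by crossing changes (resolved through the skein relation) and smoothings --- we conclude $Q\equiv P$, that is $P_{D^*}(-v^{-1},z)=P_D(v,z)$. Finally, $v\mapsto -v^{-1}$ is an involution, since applying it twice sends $v$ to $-(-v^{-1})^{-1}=v$; replacing $v$ by $-v^{-1}$ in the identity just obtained yields $P_{D^*}(v,z)=P_D(-v^{-1},z)$, as desired. The only delicate point is the bookkeeping of crossing signs under reflection combined with the algebra of the substitution; the appeal to uniqueness is routine and the normalization check is immediate.
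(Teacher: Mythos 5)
Your proof is correct, but note that the paper does not actually prove Theorem \ref{thm:mirror}: it records the mirroring formula as a well-known property of the HOMFLY polynomial, illustrates it on the knot $5_2$, and uses it only as motivation for Theorem \ref{thm:signchange}. Your argument is the standard skein-theoretic one, and all the delicate points check out: reflection in the projection plane is well defined on isotopy classes (Reidemeister moves reflect to Reidemeister moves), it exchanges positive and negative crossings while fixing the oriented smoothing, and the substitution $v\mapsto -v^{-1}$ sends the coefficient pair $(v^{-1},-v)$ to $(-v,v^{-1})$, which exactly compensates for the swap of the two crossing types, so $Q_D(v,z)=P_{D^*}(-v^{-1},z)$ satisfies the same skein relation and normalization as $P$. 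The appeal to the uniqueness half of the HOMFLY existence--uniqueness theorem then gives $Q=P$, and the final application of the involution $v\mapsto -v^{-1}$ converts $P_{D^*}(-v^{-1},z)=P_D(v,z)$ into the stated form. Since the paper supplies no proof to compare against, the only comment worth making is that your write-up is self-contained modulo the existence--uniqueness theorem for $P$, which is exactly the dependency one would expect here.
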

\begin{example}
In Figure \ref{fig:52}, the left link diagram is $5_2$ in Rolfsen's table, and the right link diagram is its mirror image. We compute the HOMFLY polynomials of these as follows. We check the formula $P_{D^*}(v,z)=P_D(-v^{-1},z)$.

\[\hspace{30pt}
\begin{array}{llllllllll}
 &\hspace{-40pt}P_{5_2}(v,z)&         &     &    & & P_{{5_2}^*}(v,z)&           &\\
=&+1v^2z^2    &+1v^4z^2 &     &    &&\hspace{20pt}=    &           &+1v^{-4}z^2  &+1v^{-2}z^2 \\
 &+1v^2z^0    &+1v^4z^0 &-1v^6z^0.& & &&\hspace{-20pt}-1v^{-6}z^{0}  &+1v^{-4}z^{0}&+1v^{-2}z^{0}.
\end{array}
\]
\begin{figure}[htbp]
\begin{tabular}{ccc}
\begin{minipage}{0.5\hsize}
\centering
\includegraphics[width=3cm]{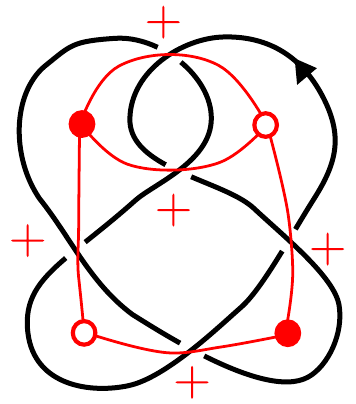}
\end{minipage}
\begin{minipage}{0.5\hsize}
\centering
\includegraphics[width=3cm]{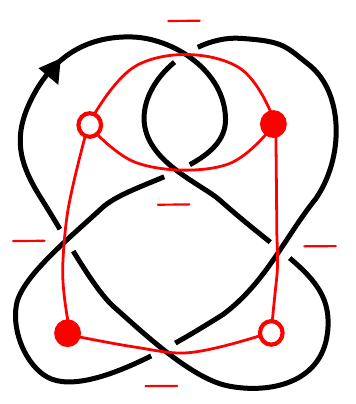}
\end{minipage}
\end{tabular}
\caption{The left diagram represents the knot $5_2$ and the right diagram is its mirror image. The bipartite graphs are their Seifert graphs.}\label{fig:52}
\end{figure}
\end{example}

In the mirror image of a link diagram, positive crossings change to negative and negative crossings change to positive. The Seifert graphs are equivalent but all signs are opposite. The coefficients of $\T_{L^*}(v)$ are obtained from those of $\T_{L}(v)$ by reversing the order. Hence the same is true for the signed interior polynomials $I^+$ of the two Seifert graphs. This phenomenon is true in the planar case, and we expect it to be true in general. For any signed bipartite graph $G$, the signed bipartite graph $-G$ is obtained from $G$ by changing all the signs. Next we prove our main theorem.

\begin{proof}[Proof of Theorem \ref{thm:signchange}]
We procced by induction on the number of negative edges $|\mathcal{E}_-|$. When $|\mathcal{E}_-|=0$, the edge set of $G$ is $\mathcal{E}=\mathcal{E}_+$. We apply Theorem \ref{thm:subgraph} to the bipartite graph $G$, forgetting sign to obtain
\[
(-x)^{d+1}I'_G(1/x)=\sum_{\mathcal{S}\subseteq\mathcal{E}}(-1)^{|\mathcal{S}|}I'_{\mathcal{S}}(x),
\]
where $d=|E|+|V|-2$. By the definition of the signed interior polynomial, we have $I^+_G(x)=I'_G(x)$ and 
\[
I^+_{-G}(x)=\sum_{\mathcal{S}\subseteq\mathcal{E}}(-1)^{|\mathcal{E}|-|\mathcal{S}|}I'_{\mathcal{S}}(x),
\]
where $\mathcal{S}$ is the bipartite graph $(V,E,\mathcal{S})$, forgetting sign. By the above, we get
\[
(-1)^{|\mathcal{E}|+d+1}x^{d+1}I^+_G(1/x)=I^+_{-G}(x).
\]
Therefore the statement holds when $|\mathcal{E}_-|=0$.

When the bipartite graph has $k$ negative edges, we suppose that the theorem holds. Let a bipartite graph $G=(V,E,\mathcal{E}_+\cup\mathcal{E}_-)$ have $k+1$ negative edges. We take a negative edge $\epsilon$ in $G$. From Lemma \ref{lem:sumint}, we have $I^+_G(x)=I^+_{G+\epsilon}(x)-I^+_{G \setminus \epsilon}(x)$, respectively. The number of the positive and negative edges of $G+\epsilon$ is $|\mathcal{E}_+|+1$ and $|\mathcal{E}_-|-1=k$, respectively. The number of the positive and negative edges of $G\setminus\epsilon$ is $|\mathcal{E}_+|$ and $|\mathcal{E}_-|-1=k$, respectively. Now by the inductive hypothesis applied to $G+\epsilon$ and $G\setminus\epsilon$, we have
\begin{eqnarray*}
I^+_G(1/x)
&=&I^+_{G+\epsilon}(1/x)-I^+_{G \setminus \epsilon}(1/x)\\
&=&(-1)^{|\mathcal{E}_+|+1+k+d+1}x^{-d-1}I^+_{-(G+\epsilon)}(x)
  -(-1)^{|\mathcal{E}_+|+k+d+1}x^{-d-1}I^+_{-(G\setminus\epsilon)}(x)\\
&=&(-1)^{|\mathcal{E}_+|+k+d+1}x^{-d-1}
\left(-I^+_{-(G+\epsilon)}(x)-I^+_{-(G\setminus\epsilon)}(x)\right).
\end{eqnarray*}
The bipartite graph $-(G+\epsilon)$ is obtained from $G$ by changing sign except $\epsilon$, so $\epsilon$ is negative edge. From Lemma \ref{lem:sumint}, we have $I^+_{-(G+\epsilon)}(x)=I^+_{-G}(x)-I^+_{-G\setminus\epsilon}(x)$. Since the bipartite graph $-(G\setminus\epsilon)$ is obtained from $G$ by changing all the signs and deleting the edge $\epsilon$, we have $-(G\setminus\epsilon)=-G\setminus\epsilon$. Hence we have
\begin{eqnarray*}
I^+_G(1/x)
&=&(-1)^{|\mathcal{E}_+|+k+d+1}x^{-d-1}
\left(-I^+_{-G}(x)+I^+_{-G\setminus\epsilon}(x)-I^+_{-G\setminus\epsilon}(x)\right)\\
&=&(-1)^{|\mathcal{E}_+|+k+1+d+1}x^{-d-1}I^+_{-G}(x),
\end{eqnarray*}
as desired.
\end{proof}
Theorem \ref{thm:signchange} restores the symmetry of the definition of the signed interior polynomial. That is, replacing negative edges with positive edges, the definition of the interior polynomial does not change essentially. To be precise, let us define another signed interior polynomial as follows.  
\begin{definition}
Let $G=(V,E,\mathcal{E}_+\cup\mathcal{E}_-)$ be a signed bipartite graph. We let
\[
I^{-}_{G}\left(x \right)=\sum_{\mathcal{S} \subseteq \mathcal{E}_{+}}(-1)^{|\mathcal{S}|}I'_{G \setminus \mathcal{S}}(x),
\]
where $G\setminus \mathcal{S}$ is obtained from $G$ by deleting the edges in $\mathcal{S}$ and forgetting sign.
\end{definition}

\begin{cor}\label{cor:symmetric}
For any signed bipartite graph $G=(V,E,\mathcal{E}_+\cup\mathcal{E}_-)$, we have
\[
I^-_{G}(x)=I^+_{-G}(x)=(-1)^{|\mathcal{E}_+|+|\mathcal{E}_-|+|E|+|V|-1}x^{|E|+|V|-1}I^+_G(1/x).
\]
\end{cor}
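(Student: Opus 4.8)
The plan is to establish the two equalities in turn. The second one, $I^+_{-G}(x)=(-1)^{|\mathcal{E}_+|+|\mathcal{E}_-|+|E|+|V|-1}x^{|E|+|V|-1}I^+_G(1/x)$, is nothing but Theorem \ref{thm:signchange} applied to $G$, so no new work is needed there. Hence the only thing to prove is the first equality $I^-_G(x)=I^+_{-G}(x)$, and this I expect to follow directly from unwinding the definitions.

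First I would write $-G=(V,E,\mathcal{E}_-\cup\mathcal{E}_+)$, where now the positive edge set is $\mathcal{E}_-$ and the negative edge set is $\mathcal{E}_+$. By the definition of the signed interior polynomial,
\[
I^+_{-G}(x)=\sum_{\mathcal{S}\subseteq\mathcal{E}_+}(-1)^{|\mathcal{S}|}I'_{(-G)\setminus\mathcal{S}}(x),
\]
where $(-G)\setminus\mathcal{S}$ is obtained from $-G$ by deleting the edges of $\mathcal{S}$ and forgetting all remaining signs. The key observation is that, as unsigned bipartite graphs, $(-G)\setminus\mathcal{S}$ and $G\setminus\mathcal{S}$ are literally the same object, namely $(V,E,(\mathcal{E}_+\setminus\mathcal{S})\cup\mathcal{E}_-)$; changing signs and then forgetting them has no effect. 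Therefore $I'_{(-G)\setminus\mathcal{S}}(x)=I'_{G\setminus\mathcal{S}}(x)$ for every $\mathcal{S}\subseteq\mathcal{E}_+$, and substituting this back yields $I^+_{-G}(x)=\sum_{\mathcal{S}\subseteq\mathcal{E}_+}(-1)^{|\mathcal{S}|}I'_{G\setminus\mathcal{S}}(x)$, which is exactly the definition of $I^-_G(x)$. Combining with Theorem \ref{thm:signchange} then gives the full chain of equalities in the statement.

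In this argument there is no real obstacle: the entire content is the bookkeeping remark that the negative edge set of $-G$ is the positive edge set of $G$, together with the fact that $I'$ is an invariant of the underlying unsigned graph and so is insensitive to a global sign change followed by forgetting. The only point requiring a moment's care is making explicit that $(-G)\setminus\mathcal{S}$ and $G\setminus\mathcal{S}$ coincide once signs are forgotten, which is immediate from the definitions of $-G$ and of deletion.
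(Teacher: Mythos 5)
Your proof is correct and matches the paper's (implicit) reasoning: the paper states Corollary \ref{cor:symmetric} without proof precisely because, as you observe, $I^-_G=I^+_{-G}$ is immediate from the definitions (the negative edge set of $-G$ is $\mathcal{E}_+$ and deletion followed by forgetting signs is insensitive to the global sign change), and the second equality is exactly Theorem \ref{thm:signchange}.
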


\section{Flyping and mutation}\label{sec:fm}
Flyping and mutation are operations on a link under which the HOMFLY polynomial does not change. First, we discuss flyping (see Figure \ref{fig:knotflype}), by which the tangle included in the thickened disc $R$ undergoes a translation and a $180^\circ$ rotation. When the link is oriented, two of its four strands meeting $\partial R$ point into $R$, and the other two point out of $R$. This leads to a separation of essentially two cases. We will concentrate on the case depicted in Figure \ref{fig:knotflype}. The other case, in which the four strands would appear parallel in the diagram, leads to a change in the Seifert graph that can be treated using \cite[Theorem 2.11]{kato} and a special case of the mutation operation introduced below.

\begin{figure}[htbp]
\begin{tabular}{ccc}
\begin{minipage}{0.4\hsize}
\centering
\includegraphics[width=4cm]{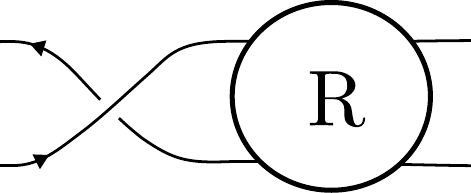}
\end{minipage}
\begin{minipage}{0.2\hsize}
\centering
$\longleftrightarrow$\\
isotopy
\end{minipage}
\begin{minipage}{0.4\hsize}
\centering
\includegraphics[width=4cm]{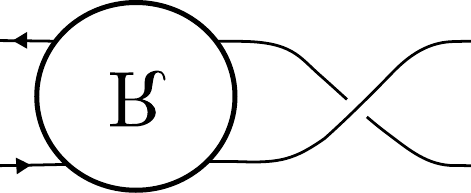}
\end{minipage}
\end{tabular}
\caption{Link flyping}\label{fig:knotflype}
\end{figure}

We examine the Seifert graphs before and after flyping. The tangle inside $R$ yields a subgraph $G_R$ that is connected to the rest at only two vertices. (It may also yield a subgraph or subgraphs connected at only one vertex but those do not lead to any new graph theoretical claims beyond \cite[Corollary 5.4]{KP} and \cite[Theorem 2.11]{kato}.) The left edge moves to the right edge and the color assignments within $G_R$ turn opposite, which we show by writing $\overline{G_R}$ after flyping. (The natural planar embedding of $\overline{G_R}$ would be an upside down version of that of $G_R$, but this does not concern us here.) Signs of edges do not change. This operation is defined for any bipartite graph with a subgraph and edge as in Figure \ref{fig:graphflype} and we call it graph flyping. Under link flyping, the isotopy class, the number of Seifert circles, the number of crossings, and hence the top of the HOMFLY polynomial, are all invariant. We expect that, for any bipartite graph, the interior polynomial does not change under graph flyping.

\begin{figure}[htbp]
\begin{tabular}{ccc}
\begin{minipage}{0.4\hsize}
\centering
\includegraphics[width=4cm]{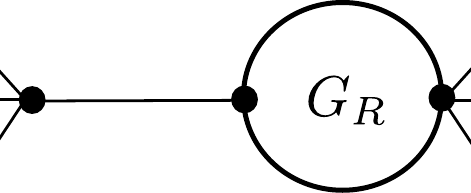}
\end{minipage}\begin{minipage}{0.2\hsize}
\centering
$\longleftrightarrow$\\
\end{minipage}
\begin{minipage}{0.4\hsize}
\centering
\includegraphics[width=4cm]{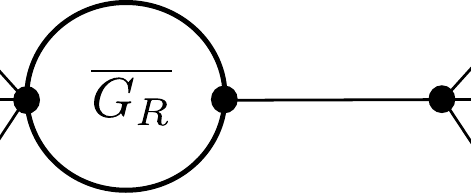}
\end{minipage}
\end{tabular}
\caption{Graph flyping}\label{fig:graphflype}
\end{figure}

Next, we discuss mutation (see Figure \ref{fig:knotmutation}), by which the tangle included in the thickened disc $R$, whose boundary cuts the link at four points, undergoes a $180^\circ$ rotation. We know that the HOMFLY polynomial does not change under mutation \cite[Proposition 2.3]{L}. 

\begin{figure}[htbp]
\centering
\begin{tabular}{cccc}
\begin{minipage}{0.4\hsize}
\hspace{40pt}
\includegraphics[width=3cm]{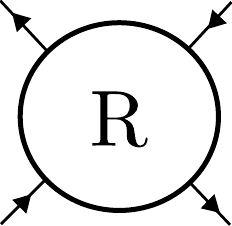}
\end{minipage}
\begin{minipage}{0.2\hsize}
\centering
$\longleftrightarrow$
\end{minipage}
\begin{minipage}{0.4\hsize}
\hspace{10pt}
\includegraphics[width=3cm]{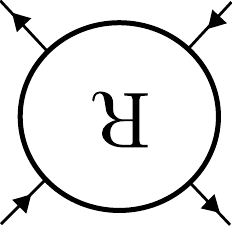}
\end{minipage}
\end{tabular}
\caption{Link mutation}\label{fig:knotmutation}
\end{figure}

In the same way as for flyping, we examine the Seifert graphs before and after mutation. The tangle inside $R$ yields a subgraph $G_R$ which is connected to the rest at only two vertices $v_1$ and $v_2$. (It may happen that $v_1=v_2$ but them the corresponding graph operation is trivial.) The subgraph $G_R$ is rotated with the signs of its edges unchanged. (More precisely, for edges of $G_R$, incidences to $v_1$ become incidences to $v_2$ and) When $v_1$ and $v_2$ are of the same color, then the color classes in $G_R$ do not change. Otherwise color classes in $G_R$ do change, which is denoted by $\overline{G_R}$. This operation is defined for any (bipartite) graph with a subgraph as in Figure \ref{fig:graphmutation} and we call it graph mutation. We expect that, for any bipartite graph, the interior polynomial does not change under graph mutation.

\begin{figure}[htbp]
\begin{tabular}{ccc}
\begin{minipage}{0.4\hsize}
\centering
\includegraphics[width=3cm]{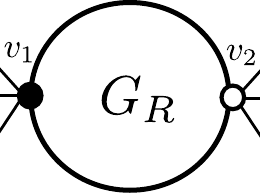}
\end{minipage}\begin{minipage}{0.2\hsize}
\centering
$\longleftrightarrow$\\
\end{minipage}
\begin{minipage}{0.4\hsize}
\centering
\includegraphics[width=3cm]{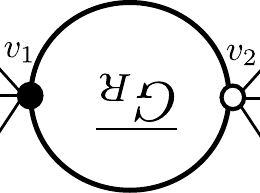}
\end{minipage}\vspace{10pt}
\end{tabular}

\begin{tabular}{ccc}
\begin{minipage}{0.4\hsize}
\centering
\includegraphics[width=3cm]{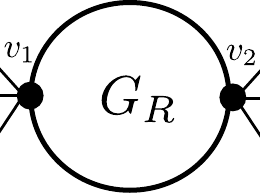}
\end{minipage}\begin{minipage}{0.2\hsize}
\centering
$\longleftrightarrow$\\
\end{minipage}
\begin{minipage}{0.4\hsize}
\centering
\includegraphics[width=3cm]{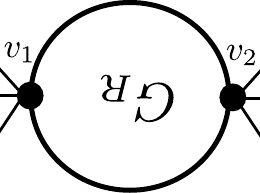}
\end{minipage}
\end{tabular}
\caption{Graph mutation}\label{fig:graphmutation}
\end{figure}

\begin{proof}[Proof of Theorem \ref{thm:nochange}]
First, we will consider the case of flyping in the unsigned case. It is sufficient to prove the claim when the bipartite graph is connected. We use ideas similar to Conway's linear skein theory. Let $G'\star G_R$ and $G'\star \overline{G_R}$ be the bipartite graphs before and after graph flyping.

We take a cycle $\epsilon_1,\delta_1,\epsilon_2,\delta_2,\ldots,\epsilon_n,\delta_n$ of edges in $G_R$. From Theorem \ref{recursion}, we have
\[
I_{G'\star G_R}(x)=\sum_{\emptyset\neq\mathcal{S}\subseteq\{\epsilon_1,\epsilon_2,\ldots,\epsilon_n\}}(-1)^{|\mathcal{S}|-1}I'_{G'\star(G_R\setminus\mathcal{S})}(x).
\]
After graph flyping, we may consider the same cycle in $\overline{G_R}$. For any $\mathcal{S}$, the bipartite graph $G'\star(\overline{G_R}\setminus\mathcal{S})$ is obtained from $G'\star (G_R\setminus\mathcal{S})$ by flyping. We repeat this operation until there are no more cycles in the subgraph. From \cite[Lemma 6.6]{K}, for any bipartite graph $G$, if we construct another bipartite graph $\widetilde{G}$ by adding a new vertex that is connected to just one old vertex, then we have $I_{\widetilde{G}}(x)=I_G(x)$. Now using this repeatedly, we reduce the claim to the cases when $G_R$ is empty or a single path. Flyping does not change these graphs, hence the interior polynomial does not change, either. This completes the proof in the unsigned case.

Next we will treat the case of flyping in signed bipartite graphs. The proof is by induction on the number of negative edges. Let $|\mathcal{E}_-(G)|$ be number of the negative edges in the bipartite graph $G$. When $|\mathcal{E}_-(G'\star G_R)|=0$, the statement holds by the above.

We suppose that the statement holds when the number of the negative edges is less than $k$. When $|\mathcal{E}_-(G'\star G_R)|=k$, we take a negative edge $\epsilon$ in $G'\star G_R$. When $\epsilon$ is in $G'$, by Lemma \ref{lem:sumint}, we have $I^+_{G'\star G_R}(x)=I^+_{(G'+\epsilon)\star G_R}(x)-I^+_{(G'\setminus \epsilon)\star G_R}(x)$. By the inductive hypothesis, we have $I^+_{(G'+\epsilon)\star G_R}(x)=I^+_{(G'+\epsilon)\star \overline{G_R}}(x)$ and $I^+_{(G'\setminus \epsilon)\star G_R}(x)=I^+_{(G'\setminus \epsilon)\star \overline{G_R}}(x)$. Therefore we have
\begin{eqnarray*}
I^+_{G'\star G_R}(x)
&=&I^+_{(G'+\epsilon)\star G_R}(x)-I^+_{(G'\setminus \epsilon)\star G_R}(x)\\ &=&I^+_{(G'+\epsilon)\star \overline{G_R}}(x)-I^+_{(G'\setminus \epsilon)\star \overline{G_R}}(x)\\
&=&I^+_{G'\star \overline{G_R}}.
\end{eqnarray*}
When $\epsilon$ is in $G_R$, we may also think of it as an edge in $\overline{G_R}$. We remark that the bipartite graph $G'\star (\overline{G_R}+\epsilon)$ is obtained from $G'\star (G_R+\epsilon)$ by flyping and that the bipartite graph $G'\star (\overline{G_R}\setminus\epsilon)$ is obtained from $G'\star (G_R\setminus\epsilon)$ by flyping. Since the number of negative edges in these bipartite graphs is $k-1$, by the inductive hypothesis, we have 
\begin{eqnarray*}
I^+_{G'\star G_R}(x)
&=&I^+_{G'\star (G_R+\epsilon)}(x)-I^+_{G'\star (G_R\setminus \epsilon) }(x)\\ &=&I^+_{G'\star \overline{(G_R+\epsilon)}}(x)-I^+_{G'\star \overline{(G_R\setminus \epsilon)}}(x)\\
&=&I^+_{G'\star (\overline{G_R}+\epsilon)}(x)-I^+_{G'\star (\overline{G_R} \setminus \epsilon) }(x)\\
&=&I^+_{G'\star \overline{G_R}}.
\end{eqnarray*}
Therefore the theorem, in the case of flyping, also holds when $|\mathcal{E}_-(G'\star G_R)|=k$ which finishes the proof by induction.

The mutation case follows by the same method as the flyping case, by applying Theorem \ref{recursion} and Lemma \ref{lem:sumint} repeatedly.
\end{proof}

\end{document}